\theoremstyle{plain}
\newtheorem{theorem}{Theorem}[section]
\newtheorem{corollary}[theorem]{Corollary}
\newtheorem{lemma}[theorem]{Lemma}
\newtheorem{proposition}[theorem]{Proposition}
\theoremstyle{definition}
\newtheorem{definition}[theorem]{Definition}
\newtheorem{preremark}[theorem]{Remark}
\newenvironment{remark}{\begin{preremark}\normalfont}{\end{preremark}}
\newcommand{\e}{\mathcal E}
\newcommand{\F}{\mathcal F}
\newcommand{\R}{\mathbb R}
\DeclareMathOperator*{\esssup}{ess\,sup}
\DeclareMathOperator*{\essinf}{ess\,inf}
\begin{document}
\title{Scale-invariant boundary Harnack principle on inner uniform domains in fractal-type spaces}
\author{Janna Lierl}

\maketitle

\begin{abstract}
We prove a scale-invariant boundary Harnack principle for inner uniform domains in metric measure Dirichlet spaces. We assume that the Dirichlet form is symmetric, strongly local, regular, and that the volume doubling property and two-sided sub-Gaussian heat kernel bounds are satisfied. We make no assumptions on the pseudo-metric induced by the Dirichlet form, hence the underlying space can be a fractal space.
\end{abstract}

\noindent
{\bf 2010 Mathematics Subject Classification:} 31C25, 60J60, 60J45. \\
{\bf Keywords:} Inner uniform domain, boundary Harnack principle, Dirichlet form, metric measure space, fractal. \\
{\bf Acknowledgement:} Research partially supported by NSF grant DMS 1004771.
{\bf Address:} Malott Hall, Department of Mathematics, Cornell University, Ithaca, NY 14853, United States.

\section*{Introduction}

The boundary Harnack principle is a property of a domain that allows us to compare the decay of different non-negative harmonic functions on the domain under Dirichlet boundary condition. More precisely, a domain is said to satisfy a boundary Harnack principle if the ratio of any two non-negative harmonic functions is bounded near some part of the boundary of the domain where both functions vanish.

The boundary Harnack principle was introduced by Kemper \cite{Kem72}, followed by works of Dahlberg (\cite{Dah77}), Wu (\cite{Wu78}) and Ancona \cite{Anc78}. Whether a given domain has this property depends on the geometry of its boundary as well as on the precise formulation of the boundary Harnack principle.

The classical formulation is the \emph{global} boundary Harnack principle. It applies to functions $u,v$ that are positive harmonic on a domain $D$ and vanish continuously at the regular points of $(\partial D) \cap V$ and are bounded in a neighborhood of $(\partial D) \cap V$, where $V$ is an open set. Bass and Burdzy (\cite{BaBu91}) proved that, if $D$ is a twisted H\"older domain of order $\alpha \in (1/2,1]$, then the boundary Harnack inequality $\frac{u}{v} \leq A_1=A_1(D,V,K)$ is satisfied on $D \subset K$, for any compact set $K \subset V$.

In this paper, we are concerned with the \emph{scale-invariant} (also called \emph{geometric}) boundary Harnack principle. That is, $V$ and $K$ are replaced by two concentric balls of radius $A_0r$ and $r$, respectively, centered at a boundary point, and the constant $A_1=A_1(A_0)$ is independent of the radius $r$.

The scale-invariant boundary Harnack principle is crucial in order to identify the Martin boundary of a bounded domain as its topological boundary. Another interesting application is two-sided estimates of the Dirichlet heat kernel as in \cite{GyryaSC, LierlSC3}. In fact, the requirements on the domain in those works are mostly due to the need for a geometric boundary Harnack principle. 

The geometric boundary Harnack principle characterizes (inner) uniform domains, as was proved by Aikawa in \cite[Theorem 1.3 and Theorem 1.4]{Aik04}.

This determines a very large class of domains that are defined using the Euclidean metric (for uniform domains in $\R^n$) or, respectively, the inner metric of the domain (for inner uniform domains). The boundary of an (inner) uniform domain can be very rough (e.g.~the Koch snowflake). However, the domain, or any part of it, must not have the shape of a cusp. This condition affects the geometry of the domain both locally and globally and is needed to ensure the scale-invariance of the statement. A local boundary Harnack principle (i.e.~for small radius only) can be proved on domains that satisfy an inner uniform condition only locally.

On uniform domains in Euclidean space, the scale-invariant boundary Harnack principle was first proved by Aikawa (\cite{Aik01}). The result was extended to uniformly John domains in Euclidean space by Aikawa, Lundh, Mizutani (\cite[Theorem 3.1]{ALM03}). A "uniformly John domain" is the same as an "inner uniform domain". A different approach to the proof of the scale-invariant boundary Harnack principle on inner uniform domains is given in \cite[Corollaire 6.13]{Anc07}, along with related results for John domains. For more results on the boundary Harnack principle see also \cite{BV96a,BV96b}.

Gyrya and Saloff-Coste (\cite{GyryaSC}) generalized Aikawa's approach to uniform domains in (non-fractal) Dirichlet spaces. They considered a metric measure space equipped with a symmetric strongly local regular Dirichlet form that satisfies a parabolic Harnack inequality.  
Moreover, they deduced that the boundary Harnack principle holds on inner uniform domains, by considering the inner uniform domain as a uniform domain in a different metric 
space, namely the completion of the inner uniform domain with respect to its inner metric. 

A direct proof of the geometric boundary Harnack principle directly on inner uniform domains is given in (\cite{LierlSC1}) by Saloff-Coste and the author, following \cite{ALM03}. In \cite{LierlSC1}, the underlying metric measure space is equipped with a local, regular (possibly non-symmetric) Dirichlet form satisfying some technical assumptions (cf.~\cite{LierlSC2}) which provide control over the non-symmetric Dirichlet form in terms of its symmetric strongly local part. Furthermore, the symmetric stongly local part is assumed to satisfy the volume doubling property and the Poincar\'e inequality.

Both in \cite{GyryaSC} and \cite{LierlSC1}, the case of fractal spaces was excluded by putting certain hypotheses on the pseudo-metric induced by the Dirichlet form and by
assuming the classical space-time scaling $\Psi(r)=r^2$.

The aim of this paper is to give a proof of the geometric boundary Harnack principle in a more general setting that includes fractal spaces.
Applications of this result to proving two-sided bounds for the Dirichlet heat kernel on inner uniform domains will be presented in forthcoming papers \cite{KL1, KL2}.

In this paper, the underlying space $(X,d,\mu)$ is a metric measure space equipped with a symmetric strongly local regular Dirichlet form $(\e,\F)$ that satisfies volume doubling and weak two-sided heat kernel estimates w-HKE($\Psi$). The space-time scaling is captured by a continuous strictly 
increasing bijection $\Psi:[0,\infty) \to [0,\infty)$ which has the property that, for some constants $1 < \beta_1 \leq \beta_2 < \infty$ and $C_{\Psi} \in [1,\infty)$,
\begin{align} \label{eq:Psi beta}
 C_{\Psi}^{-1} \left( \frac{R}{r} \right)^{\beta_1} \leq \frac{\Psi(R)}{\Psi(r)} \leq C_{\Psi} \left( \frac{R}{r} \right)^{\beta_2}
 \quad \forall 0 < r \leq R < \infty.
 \end{align}

The main result of this paper is the following scale-invariant boundary Harnack principle.
\begin{theorem}
Let $(X,d,\mu,\e,\F)$ be a metric measure Dirichlet space that satisfies volume doubling \emph{(VD)}, and weak two-sided heat kernel estimates \emph{w-HKE($\Psi$)}. Assume that $(X,d)$ is complete and geodesic, and let $\Omega$ be an inner uniform domain in $(X,d)$.

Then there exist constants $A_0, A_1 \in (1,\infty)$ such that for any 
$\xi \in \widetilde\Omega \setminus \Omega$, $0 < r < R(\Omega)$,
and any two non-negative harmonic functions $u$ and $v$ on $B_{\Omega}(\xi,A_0r)$ with weak Dirichlet boundary condition along 
$\widetilde\Omega \setminus \Omega$, we have 
 \[  \frac{u(x)}{u(x')} \leq A_1 \frac{v(x)}{v(x')}, \quad \forall x, x' \in B_{\Omega}(\xi,r). \]
Here $B_{\Omega}(\xi,r) = \{ z \in \Omega : d_{\Omega}(\xi,z) < r \}$, $d_{\Omega}$ is the inner distance of the domain $\Omega$, and $\widetilde\Omega$ is the completion of $\Omega$ with respect to $d_{\Omega}$.
$R(\Omega)$ depends on the inner uniformity constants and on the diameter of $\Omega$, and can be chosen to be infinity if $\Omega$ is unbounded.
\end{theorem}

For example, removing the bottom line in the Sierpinski gasket produces a subset that is an inner uniform domain in the Sierpinski gasket. Hence, any two harmonic functions
on the Sierpinski gasket that vanish along the bottom line, decay at comparable rates near the bottom line.

The structure of this paper follows the presentation of the non-fractal case in \cite{LierlSC1}. We begin by recalling well-known definitions and properties of Dirichlet spaces  in Section \ref{sec:lw solutions}. In Section \ref{sec:PHI} we state the geometric hypotheses on the space, the volume doubling property and weak two-sided heat kernel estimates, and relate them to further conditions, EHI, RES($\Psi$), CSA($\Psi$), which will be used throughout the paper.
In Section \ref{sec:GF} we discuss the (local) inner uniformity conditions and collect some properties. In addition, we present estimates for the Dirichlet heat kernel 
on metric balls and for Green's functions on balls intersected with an inner uniform domain. Furthermore, we give detailed proofs of a maximum principle and a representation formula for harmonic functions, which were omitted in \cite{LierlSC1}. The latter results are essential ingredients in the proof of the geometric boundary Harnack principle (Theorem \ref{thm:bHP for u}) in Section \ref{sec:BHP}. Finally, in Section \ref{sec:applications}, we give two applications: First, the identification of the Martin boundary of a bounded inner uniform domain. Second, the construction of a harmonic profile on an unbounded inner uniform domain, which will be important to obtain bounds on the Dirichlet heat kernel in the forthcoming papers \cite{KL1, KL2}.

\section{Dirichlet space and harmonic functions} \label{sec:lw solutions}

\subsection{Dirichlet space and harmonic functions} \label{ssec:weak solutions}

Let $(X,d,\mu, \e, \F)$ be a metric measure Dirichlet space. That is, $X$ is a locally compact, separable, complete metric space, and all metric balls are relatively compact. $\mu$ is a positive Radon measure on $X$ with full support. 
$(\e, \F)$ is a symmetric, strongly local, regular Dirichlet form on $L^2(X,\mu)$, see \cite{FOT94}. 
We denote by $(L,D(L))$ the infinitesimal generator of $(\e,\F)$. 

There exists a measure-valued quadratic form $\Gamma$ defined by
 \[ \int_X f \, d\Gamma(u,u) = \e(uf,u) - \frac{1}{2} \e(f,u^2), \quad \forall f,u \in \F \cap L^{\infty}(X), \]
and extended to unbounded functions by setting $\Gamma(u,u) = \lim_{n \to \infty} \Gamma(u_n,u_n)$, where $u_n = \max\{\min\{u,n\},-n\}$. Using polarization, we obtain
a bilinear form $\Gamma$. In particular,
\[ \e(u,v) = \int_X d\Gamma(u,v), \quad \forall u,v \in \F. \]
The quadratic form $\Gamma(u,u)$ is called the \emph{energy measure} of the form $(\e,\F)$. It is denoted as $\mu_{\langle u \rangle}$ in \cite{FOT94}.
Further, $\Gamma$ satisfies 
\begin{align} \label{eq:Gamma(fg)}
 \int_X d\Gamma(fg,fg)
& \leq  2\int_X f^2 d\Gamma(g,g)  
      + 2\int_X g^2 d\Gamma(f,f), \quad f,g \in \F \cap L^{\infty}(X).
\end{align}
Here, on the right hand side, quasi-continuous versions of $f$ and $g$ must be used. See \cite[Lemma 3.2.5 and Lemma 5.6.1]{FOT94}

For $U \subset X$ open, set
 \[ \F_{\mbox{\tiny{{loc}}}}(U)  =  \left\{ f \in L^2_{\mbox{\tiny{loc}}}(U) : \forall \mbox{ open rel.~compact } A \subset U, \ \exists f^{\sharp} \in \F, f\big|_A = f^{\sharp}\big|_A \mbox{$\mu$-a.e.} \right\},   \]
where $L^2_{\mbox{\tiny{loc}}}(U)$ is the space of functions that are locally in $L^2(U)$.
For $f,g \in \F_{\mbox{\tiny{loc}}}(U)$ we can define $\Gamma(f,g)$ locally by $\Gamma(f,g)\big|_A = \Gamma(f^{\sharp},g^{\sharp})\big|_A$, where $A \subset U$ is open relatively compact and $f^{\sharp},g^{\sharp}$ are functions in $\F$ such that $f = f^{\sharp}$, $g = g^{\sharp}$ $\mu$-a.e.~on $A$. 

Define
\begin{align*}
 \F(U)   &=  \left\{ u \in \F_{\mbox{\tiny{loc}}}(U) : \int_U |u|^2 d\mu + \int_U d\Gamma(u,u) < \infty \right\}, \\
\F_{\mbox{\tiny{c}}}(U)  &=  \big\{ u \in \F(U) : \mbox{ The essential support of } u \mbox{ is compact in } U \big\}, \\
\F^0(U)  &= \mbox{ the closure of } \F_{\mbox{\tiny{c}}}(U) \mbox{ in } \F \mbox{ for the norm }
\left(\e(u,u) + \int_X u^2 d\mu \right)^{1/2}. 
\end{align*}
Note that $\F_{\mbox{\tiny{c}}}(U)$ is a linear subspace of $\F$.

\begin{definition}
Let $V \subset X$ be open. A function $u: V \to \R$ is \emph{harmonic} on $V$, if 
\begin{enumerate}
\item
$u \in \F_{\mbox{\tiny{loc}}}(V)$,
\item 
For any function $\phi \in \F_{\mbox{\tiny{c}}}(V), \ \e(u,\phi) = 0$.
\end{enumerate} 
If $u \in \F_{\mbox{\tiny{loc}}}(V)$ satisfies
$\e(u,\phi) \geq 0$ for all $\phi \in \F_{\mbox{\tiny{c}}}(V)$ with $\phi\geq 0$, then $u$ is called \emph{superharmonic}. 
\end{definition}

\subsection{Dirichlet-type Dirichlet form}

\begin{definition}
For an open set $U \subset X$, the Dirichlet-type Dirichlet form on $U$ is defined as
\[ \e^D_U(f,g) := \e(f,g),  \quad f,g \in \F^0(U). \]
\end{definition}
Let $(L^D_U,D(L^D_U))$ be the infinitesimal generator and $P^D_{U,t}$, $t > 0$, be the semigroup associated with $(\e^D_U,\F^0(U))$. If the semigroup admits an integral kernel, that is, a non-negative Borel measurable map $p^D_U(t,x,y):(0,\infty) \times U \times U \to \R$ such that, for any $f \in L^2(U,\mu)$,
\[ P^D_{U,t} f (x) = \int_U p^D_U(t,x,y) f(y) \mu(dy), \quad \forall t>0, x \in U,  \]
then $p^D_U(t,x,y)$ is called the \emph{Dirichlet heat kernel} on $U$. Using the reasoning in \cite[Proposition 2.3]{SturmII}, one can show that the map 
$y \mapsto p^D_U(t,x,y)$ is in $\F^0(U)$.
$p^D_X(t,x,y)$ is simply denoted as $p(t,x,y)$ and called the \emph{heat kernel} on $X$.

The extended Dirichlet space $\F^0(U)_e$ is defined as the family of all measurable, almost everywhere finite functions $u$ such that there exists an approximating sequence $(u_n) \subset \F^0(U)$ that is $\e^D_U$-Cauchy and $u = \lim u_n$ $\mu$-almost everywhere. If $(\e^D_U,\F^0(U))$ is transient then $\F^0(U)_e$ is complete, by \cite[Lemma 1.5.5]{FOT94}.

$\e^D_U$ is canonically extended to a bilinear form on $\F^0(U)_e$ by setting $\e(u,u) := \lim_{n \to \infty} \e(u_n,u_n)$, where $u_n \in \F^0(U)$ is any approximating sequence for $u \in \F^0(U)_e$.

\subsection{Capacity}
The potential theory for symmetric regular Dirichlet forms is developed in \cite[Chapter 2]{FOT94}. Here we recall some definitions and facts that we need in the sequel.

Let $U \subset X$ be open. Suppose $(\e^D_U,\F^0(U))$ is transient. 
For an open subset $A \subset U$ let
 \[ \mathcal{L}_{A,U} = \{ u \in \F^0(U)_e : u \geq 1 \mbox{ $\mu$-a.e. on } A \}. \] 
The \emph{$0$-capacity} of $A$ in $U$ is defined as
 \[ \mbox{Cap}_{U} (A) = \begin{cases} \inf_{u \in \mathcal{L}_{A,U}} \e^D_U(u,u), & \mathcal{L}_{A,U} \neq \emptyset \\
                                                   + \infty,      & \mathcal{L}_{A,U} = \emptyset. 
                                     \end{cases}
\]
For an arbitrary subset $A$ of $U$, the capacity is defined as
\[ \mbox{Cap}_{U} (A) := \inf \{ \mbox{Cap}_{U}(B) : B \mbox{ is an open subset of $U$ with } A \subset B \}. \]
Notice the set monotonicity of the capacity: If $A \subset B$ are subsets of $U$, then $\mbox{Cap}_{U} (A) \leq \mbox{Cap}_{U} (B)$. If
$V \subset U$ is open, then $\mbox{Cap}_{U} (A) \leq \mbox{Cap}_{V} (A)$.
Now for an arbitrary subset $A \subset U$, let
 \[ \mathcal{\widetilde L}_{A,U} = \{ u \in \F^0(U)_e : \widetilde u \geq 1 \mbox{ q.e. on } A \}, \] 
where $\widetilde u$ is a quasi-continuous modification of $u$.
If $\mathcal{\widetilde L}_{A,U} \neq \emptyset$ then there exists a unique element $e_{A} \in \mathcal{L}_{A,U}$ such that
 \[ {\mbox{Cap}}_{U} (A) = \e^D_U(e_A,e_A). \]
Moreover, $e_A$ is the unique element of $\F^0(U)_e$ satisfying $\widetilde e_A=1$ q.e.~on $A$ and $\e^D_U(e_{A},v) \geq 0$ for all $v \in \F^0(U)_e$ with $\widetilde v \geq 0$ q.e.~on $A$. See \cite[Theorem 2.1.5 and p.71]{FOT94}.
 
Now assume that $A$ is closed.
By the $0$-order counterpart of \cite[Lemma 2.2.6]{FOT94} (cf.~also \cite[Lemma 2.2.10]{FOT94}), $e_{A}= \mathcal{G}_U \nu_A$ is a ($0$-order) potential. That is, the \emph{equilibrium measure} $\nu_A$ is a Radon measure with $\nu_A(U\setminus A) = 0$ which charges no set of zero capacity, and it holds
\[ \e^D_U(\mathcal{G}_U \nu_A,v) = \int v \, d\nu_A, \qquad \forall v \in \F \cap C_0(U), \]
where $C_0(X)$ is the space of real continuous functions with compact support in $X$.
In particular, we have
\begin{align} \label{eq:Cap nu}
  {\mbox{Cap}}_{U} (A)
 =  \e^D_U(e_{A},e_{A}) 
 =  \e^D_U( \mathcal{G}_U \nu_A,e_{A})
  = \int \widetilde{e_{A}} \, d\nu_A
 = \nu_A(A).
\end{align}


\section{Geometric hypotheses on the underlying space} \label{sec:PHI}

Let $(X,d,\mu, \e, \F)$ be a MMD space. In this section we describe the geometric hypotheses on the underlying space $X$. For examples of fractal-type spaces that satisfy these conditions, see e.g. \cite{BP88, BaBa92, Kum93, FHK94, Baba99, BBK06}.

\begin{definition}
$(X,\mu)$ satisfies the \emph{volume doubling property} VD if there exists a constant $C_{\mbox{\tiny{VD}}} \in (0,\infty)$ such that for every $x \in X$, $R>0$,
 \[  V(x,2R) \leq C_{\mbox{\tiny{VD}}} \, V(x,R),  \]
where $V(x,R) = \mu( B(x,R))$ denotes the volume of the ball $B(x,R)=\{ y \in X : d(x,y) < R \}$.
\end{definition} 
 
Let $\Psi:[0,\infty) \to [0,\infty)$ be a continuous strictly increasing  bijection satisfying \eqref{eq:Psi beta} for some constants $1 < \beta_1 \leq \beta_2 < \infty$ and $C_{\Psi} \in [1,\infty)$.


\begin{definition}
\begin{enumerate}
\item
$(\e,\F)$ satisfies the \emph{weak two-sided heat kernel bounds} w-HKE($\Psi$) if the heat
kernel $p(t,x,y)$ on $X$ exists and if there are positive constants $c_1$, $c_2$, $c_3$, $c_4$ and $\epsilon > 0$ so that
\begin{align*}
 p(t,x,y) \leq    \frac{c_1}{ V(x,\Psi^{-1}(t)) } \exp \left( - c_2 \left( \frac{\Psi(d(x,y))}{t} \right)^{\frac{1}{\beta_2-1}}\right)
 \end{align*}
for all $t \in (0,\infty)$ and almost every $x,y \in X$, and
\begin{align*}
p(t,x,y)
\geq \frac{c_3}{ V(x,\Psi^{-1}(t)) }
\end{align*}
for all $t \in (0,\infty)$ and almost every $x,y \in X$ with $d(x,y) \leq \epsilon \Psi^{-1}(t)$.
\item
$(\e,\F)$ satisfies the \emph{weak local lower estimate} w-LLE($\Psi$) if there exist $c_5>0$ and $\epsilon \in (0,1)$ such that, for any ball $B=B(x_0,R) \subset X$, the Dirichlet heat $p^D_B(t,x,y)$ exists and satisfies
\begin{align} \label{eq:LLE}
p^D_B(t,x,y) \geq \frac{c_5}{ V(x_0,\Psi^{-1}(t)) }
\end{align} 
for all $0 < t \leq \Psi(R)$, and almost every $x,y \in B(x_0,\epsilon \Psi^{-1}(t))$.
\end{enumerate}
\end{definition}

\begin{remark} \label{rem:LLE}
In \cite{BGK12}, a weaker version of w-LLE($\Psi$) is defined: \eqref{eq:LLE} is required to hold only for $t \leq \Psi(\epsilon R)$. Assuming VD, their w-LLE($\Psi$) is proved to be equivalent to w-HKE($\Psi$) in \cite[Theorem 3.1]{BGK12}. It is not hard to modify the proof to show the equivalence of w-HKE($\Psi$) with our  version of w-LLE($\Psi$).
\end{remark}

\begin{theorem} \label{thm:w-HKE w-PHI} 
Assume \emph{VD} is satisfied. Then the following are equivalent:
\begin{enumerate}
\item
$X$ satisfies \emph{w-HKE($\Psi$)}.
\item
$X$ satisfies \emph{w-LLE($\Psi$)}.
\end{enumerate}
Further, under any of the conditions (i), (ii), the heat kernel $p(t,x,y)$ is a continuous function of $(t,x,y) \in (0,\infty) \times X \times X$, and, for any open subset $U \subset X$, the Dirichlet heat kernel $p^D_{U}(t,x,y)$ is a continuous function of $(t,x,y) \in (0,\infty) \times U \times U$.
\end{theorem}

\begin{proof}
See \cite[Theorem 3.1]{BGK12} and Remark \ref{rem:LLE}.
\end{proof}

\begin{remark}
Assuming VD, condition w-HKE($\Psi$) is also equivalent to a weak parabolic Harnack inequality w-PHI($\Psi$), see \cite[Theorem 3.1]{BGK12}.
\end{remark}

%

\begin{definition}
Let $V,U$ be open sets in $X$ with $V \subset \overline{V} \subset U$. A function
$\phi \in \F$ is called a cutoff function for $V$ in $U$ if $0 \leq \phi \leq 1$ $\mu$-a.e.~on $X$, $\phi = 1$ $\mu$-a.e.~on $V$ and $\widetilde{\phi} = 0$ q.e.~on $X \setminus U$.
\end{definition}

The following variants CSD and CSA of the cutoff Sobolev inequality have been introduced in \cite{AB13}. Note that in this paper we do not require cutoff functions to be continuous.

\begin{definition}
Let $D_0$, $D_1$ be open subsets of $X$ with $D_0 \subset \overline{D_0} \subset D_1$, and let $U =
D_1 \setminus D_0$. We say that property CSD($D_0$,$D_1$,$\theta$) holds if there exists a cutoff function $\phi$ for $D_0 \subset D_1$ which satisfies, for all $f \in \F$,
 \begin{align} \label{eq:CSD}
 \int_U f^2 d\Gamma(\phi,\phi) \leq \frac{1}{8} \int_U \phi^2 d\Gamma(f,f) + \theta \int_U f^2 d\mu.
 \end{align}
\end{definition}

\begin{definition}
We say that condition CSA($\Psi$) holds if there exists a constant $C_S>0$ such that for every $x \in X$, $R>0$, $r>0$ the condition CSD($B(x,R)$, $B(x,R+r)$, $C_S/\Psi(r)$) holds.
\end{definition}

\begin{definition}
$(\e,\F)$ satisfies the \emph{elliptic Harnack inequality} EHI, if there exist constants $C>0$ and $\delta \in (0,1)$ such that, for any ball $B(x,R) \subset X$ and any non-negative function $u \in \F$ that is harmonic on $B(x,R)$, we have
\[ \esssup_{z \in B(x,\delta R)} u(z) \leq C \essinf_{z \in B(x,\delta R)} u(z). \]
\end{definition}

\begin{definition}
$(\e,\F)$ satisfies the \emph{weak Poincar\'e inequality} PI($\Psi$) on $X$ if there exist constants $C_{\mbox{\tiny{PI}}} \in (0,\infty)$ and $\kappa \geq 1$ such that for any ball $B = B(x,R) \subset X$ and any $f \in \F$,
\[ \int_B (f - f_B)^2 d\mu \leq C_{\mbox{\tiny{PI}}} \Psi(R) \int_{B(x,\kappa R)} d\Gamma(f,f).  \]
Here $f_B = \mu(B)^{-1} \int_B f d\mu$.
\end{definition}

\begin{definition}
$(\e,\F)$ satisfies the \emph{resistance condition} RES($\Psi,K$) if there exist constants $K, C > 1$ such that for any $B(x,(K+1)R) \subsetneq X$,
 \[ C^{-1} \frac{\Psi(R)}{V(x,R)} \leq \left(\mbox{Cap}_{B(x,KR)}(B(x,R)) \right)^{-1} \leq C \frac{\Psi(R)}{V(x,R)}.   \]
 We say that RES($\Psi$) is satisfied if RES($\Psi,K$) is satisfied for all $K>1$.
\end{definition}

\begin{theorem} \label{thm:VD+PI+CSA}
Assume that $(X,d,\mu,\e,\F)$ satisfies \emph{VD} and \emph{w-HKE($\Psi$)}. Then $(\e,\F)$ is conservative and satisfies \emph{EHI}, \emph{CSA($\Psi$)}, \emph{PI($\Psi$)}, and \emph{RES($\Psi$)}. Moreover, $X$ is connected.
\end{theorem}

The classical proof of the Poincar\'e inequality (cf.~\cite{SC92}) uses the semigroup associated with the Neumann-type form 
\[ \e^N_U(f,g) := \int_U d\Gamma(f,g),  \quad f,g \in \F(U), \]
where $U \subsetneq X$ is an open set so that $(\e^D_U,\F^0(U))$ is transient.
A proof in \cite{BBK06} claimed that the Neumann semigroup admits a kernel; it is not clear to the author that this is the case. Here we give a proof that does not rely on the existence of the Neumann kernel. 

\begin{proposition}
Assume that \emph{CSA($\Psi$)} holds. Then the Neumann-type form $(\e^N_U,\F(U))$ is a strongly local Dirichlet form on $L^{2}(U, \mu)$.
\end{proposition}

\begin{proof}
It is clear from the definition that $(\e^N_U,\F(U))$ is a strongly local Markovian symmetric form. The closedness follows as in the proof of \cite[Proposition 2.51]{GyryaSC}, and by applying CSA($\Psi$).
\end{proof}

Let $P^N_{U,t}$, $t>0$, be the strongly continuous contraction semigroup associated with the Neumann-type form $(\e^N_U,\F(U))$.

\begin{lemma} \label{lem:N D}
Let $B=B(x,R) \subset X$. Let $\epsilon \in (0,1)$ and $\epsilon B = B(x,\epsilon R)$. Then the Dirichlet semigroup on $\epsilon B$ is dominated by the Neumann semigroup on $B$. That is,
\begin{align} \label{eq:P^N>P^D}
P^N_{B,t} f \geq P^D_{\epsilon B,t} f   \quad \forall 0 \leq f \in L^2(\epsilon B), t>0.
\end{align} 
\end{lemma}

\begin{proof}
Let $f \in L^2(\epsilon B)$, $f \geq 0$. Then
\[ \left(P^D_{\epsilon B,t} f - P^N_{B,t} f \right)^+ \leq P^D_{\epsilon B,t} f. \]
Hence, by \cite[Lemma 4.4]{GH08}, $\left(P^D_{\epsilon B,t} f - P^N_{B,t} f \right)^+ \in \F(B)$. Since $P^D_{\epsilon B,t} f$ and hence $\left(P^D_{\epsilon B,t} f - P^N_{B,t} f \right)^+$ can be approximated by functions in $\F_{\mbox{\tiny{c}}}(\epsilon B)$, we find that $\left(P^D_{\epsilon B,t} f - P^N_{B,t} f \right)^+ \in \F^0(\epsilon B)$.
Now we follow \cite[Proof of Theorem 3.3]{Ou96}.
Let $u= P^D_{\epsilon B,t} f$ and $v = P^N_{B,t} f$. Then
\begin{align*}
 \e^D_{\epsilon B}(u,(u-v)^+) 
& = \e^N_{\epsilon B}(u-v,(u-v)^+) + \e^N_{\epsilon B}(v,(u-v)^+)  \\
& \geq \e^N_{\epsilon B}(v,(u-v)^+) = \e^N_{B}(v,(u-v)^+).
\end{align*}
Since $L^D_{\epsilon B} u = \frac{d}{dt} u$ and $L^N_{B} v = \frac{d}{dt} v$, we can rewrite the above inequality as
\[ \int \left(\frac{d}{dt} u \right) (u-v)^+ d\mu \leq \int \left( \frac{d}{dt} v \right)  (u-v)^+ d\mu. \]
Hence, the function $g(t) = \int ((u-v)^+)^2 d\mu$ satisfies $\frac{d}{dt}g \leq 0$ and $g(0)=0$. Therefore $(u-v)^+=0$, which proves the claim.
\end{proof}

\begin{proof}[Proof of Theorem \ref{thm:VD+PI+CSA}]
The weak heat kernel bounds w-HKE($\Psi$) are equivalent to a weak parabolic Harnack inequality w-PHI($\Psi$), by \cite[Theorem 3.1]{BGK12}.  w-PHI($\Psi$) implies the elliptic Harnack inequality for non-negative harmonic functions that are bounded.
Then, \cite[Proof of Theorem 7.4, (iii) $\Rightarrow$ (H)]{GT12} shows that EHI holds for general non-negative harmonic functions on a ball $B$ for which the smallest Dirichlet eigenvalue $\lambda_{\mbox{\tiny{min}}}(B)$ is positive. The assumption $\lambda_{\mbox{\tiny{min}}}(B)>0$ can be dropped: \cite[Theorem 4.6.5]{FOT94} implies that for any ball $B \subset X$ and $f \in \F$, there exists of a solution $u = H_B \widetilde f$ to the Dirichlet problem
\begin{align*}
\begin{cases}
& u \mbox{ harmonic on B }, \\
& f - u \in \F^0(B).
\end{cases} 
\end{align*}
As can be seen from \cite[Proof of Theorem 4.6.5]{FOT94}, the operator $H_B$ is bounded. Hence, we can avoid the use of \cite[Lemma 7.1]{GT12} and apply \cite[Lemma 7.2(c)]{GT12} directly. This proves EHI.
By \cite[Lemma 7.3(a)]{GT12}, it follows that $X$ is connected.

Notice that the weak heat kernel estimates w-HKE($\Psi$) are equivalent to the upper estimate $\mbox{UE}_{\mbox{\tiny weak}}$ together with the near diagonal lower bound $\mbox{NLE}_{\mbox{\tiny weak}}$ in the notation of \cite{GT12}. Indeed, by \cite[Lemma 3.19]{GT12} it follows that $\Phi(d(x,y),t) \geq \left(\frac{\Psi(d(x,y))}{t} \right)^{\frac{1}{\beta_2-1}}$, where $\Phi$ is as in \cite[(3.34)]{GT12}, hence $\mbox{UE}_{\mbox{\tiny weak}}$ and $\mbox{NLE}_{\mbox{\tiny weak}}$ imply w-HKE($\Psi$). Clearly, w-HKE($\Psi$) implies $\mbox{NLE}_{\mbox{\tiny weak}}$. $\mbox{UE}_{\mbox{\tiny weak}}$ follows from  w-PHI($\Psi$) by slightly modifying the argument given in \cite[Proof of Theorem 3.1]{BGK12}: It is immediate to extend \cite[(4.66)]{BGK12} to $k \in [1,\infty)$ by replacing the constant $c_2$ with $c_2^2$. Repeating the argument in \cite[Subsection 4.3.6]{BGK12} with general $k \in [1,\infty)$ yields \cite[(4.72)]{BGK12},
\begin{align*}
 p_t(x,y)
& \leq 
\frac{C c_2^k}{V(x,\Psi^{-1}(t))} \exp \left( - \frac{c_1 d(x,y)}{\Psi^{-1}(t/k)} \right) \\
& = 
\frac{C }{V(x,\Psi^{-1}(t))} \exp \left( k \log c_2 - \frac{c_1 d(x,y)}{\Psi^{-1}(t/k)} \right). 
\end{align*}
Optimizing over $k \in [1,\infty)$ and using \cite[(3.35)]{GT12} with $\lambda = \frac{k}{t}$ yields
\begin{align*}
 p_t(x,y)
\leq 
\frac{C }{V(x,\Psi^{-1}(t))} \exp \left( - \Phi(c \, d(x,y),t) \right), 
\end{align*}
which is the desired upper bound $\mbox{UE}_{\mbox{\tiny weak}}$.


Next, we show that $(\e,\F)$ is conservative. 
Then the implication VD, w-HKE($\Psi$) $\Rightarrow$  CSA($\Psi$) follows easily by adapting the arguments of \cite[Theorem 5.5, Lemma 5.3]{AB13} to the present setting. The proof of \cite[Lemma 5.3]{AB13} refers to \cite{GH10}, where conservativeness of the form is assumed.

In the case that $X$ is bounded, it is clear that $(\e,\F)$ is conservative because the constant function $1$ is in $\F$.

In the case when $X$ is unbounded, it is proved in \cite[Theorem 7.4]{GT12} that $\mbox{UE}_{\mbox{\tiny weak}}$ together with $\mbox{NLE}_{\mbox{\tiny weak}}$ are equivalent to EHI together with an exit time estimate E($\Psi$). 
By \cite[Lemma 7.3]{GT12}, condition E($\Psi$) implies that the form is conservative.

Next, we show PI($\Psi$). We follow the line of reasoning in the proof of \cite[Theorem 5.5.1]{SC02}.
Let $P^N_{B,t}$, $t>0$, be the Neumann semigroup on the ball $B=B(x,\frac{1}{\epsilon} R)$, and $P^D_{\epsilon B,t}$, $t>0$, the Dirichlet semigroup on the ball $\epsilon B=B(x,R)$, where $\epsilon  \in (0,1)$ is the parameter in w-LLE($\Psi$). Recall that w-HKE($\Psi$) is equivalent to w-LLE($\Psi$) by Theorem \ref{thm:w-HKE w-PHI}. By Lemma \ref{lem:N D},
\begin{align}
P^N_{B,t} f \geq P^D_{\epsilon B,t} f \quad \forall 0 \leq f \in L^2(\epsilon B), t>0.
\end{align} 
Let $f \in \F(B)$, $y \in B(x,\epsilon R)$. Let $\phi$ be a cutoff function that is $1$ on $B(x,\epsilon R)$ and has compact support in $B(x,R)$. Applying \eqref{eq:P^N>P^D} and w-LLE($\Psi$) with $t=\Psi(R)$, we obtain
\begin{align*}
& P^{B,N}_{\Psi(R)} \left([ f - P^N_{B,\Psi(R)} f(y) ]^2\right) (y) \\
& \geq P^D_{\epsilon B,\Psi(R)} \left(\phi[ f - P^N_{B,\Psi(R)} f(y) ]^2\right) (y) \\
& = \int_B p^D_{\epsilon B}(\Psi(R),y,z) \phi(z)|f(z)-P^N_{B,\Psi(R)} f(y)|^2 \mu(dz) \\
& \geq \int_{B(x,\epsilon R)} p^D_{\epsilon B}(\Psi(R),y,z) |f(z)-P^N_{B,\Psi(R)} f(y)|^2 \mu(dz) \\
& \geq \frac{c_5}{ V(x,R) } \int_{B(x,\epsilon R)}  |f(z)-P^N_{B,\Psi(R)} f(y)|^2 \mu(dz) \\
& \geq \frac{c_5}{ V(x,R) } \int_{B(x,\epsilon R)}  |f(z)-f_{B(x,\epsilon R)}|^2 \mu(dz),
\end{align*}
where $f_{B(x,\epsilon R)}$ is the mean of $f$ over the ball $B(x,\epsilon R)$.
Integrating over $B(x,\epsilon R)$ and using volume doubling yields
\begin{align} \label{eq:5.5.6}
& \int_{B(x,\epsilon R)} P^N_{B,\Psi(R)} \left([ f - P^N_{B,\Psi(R)} f(y) ]^2\right) (y) \mu(dy) \nonumber \\
& \geq c_5 \frac{V(x,\epsilon R)}{V(x,R)} 
        \int_{B(x,\epsilon R)}  |f(z)-f_{B(x,\epsilon R)}|^2 \mu(dz) \nonumber \\
& \geq c(\epsilon) \int_{B(x,\epsilon R)}  |f(z)-f_{B(x,\epsilon R)}|^2 \mu(dz).
\end{align}
The next computation uses the fact that the Neumann semigroup $P^N_{B,t}$, $t>0$, is conservative, i.e.~$P^N_{B,t} 1=1$ for all $t>0$, and an $L^2$-contraction.
\begin{align}  \label{eq:5.5.7}
& \int_{B(x,\epsilon R)} P^N_{B,\Psi(R)} \left([ f - P^N_{B,\Psi(R)} f(y) ]^2\right) (y) \mu(dy)  \nonumber \\
& = \int_{B(x,\epsilon R)} P^N_{B,\Psi(R)}(f^2) - 2 \left[ P^N_{B,\Psi(R)} f(y) \right]^2 + P^N_{B,\Psi(R)}1(y) \left[P^N_{B,\Psi(R)} f(y) \right]^2 \mu(dy) \nonumber \\
& \leq 
\Vert f \Vert^2_{B(x,\epsilon R),2} - \Vert P^N_{B,\Psi(R)}  f \Vert^2_{B(x,\epsilon R),2}
 = - \int_0^{\Psi(R)} \frac{\partial}{\partial s} \Vert P^N_{B,s}  f \Vert^2_{B(x,\epsilon R),2} ds \nonumber \\
& = 
- 2 \int_0^{\Psi(R)} \langle L^N_B P^N_{B,s} f, P^N_{B,s} f \rangle ds \nonumber 
 = 2 \int_0^{\Psi(R)} \e^N_B( P^N_{B,s} f, P^N_{B,s} f ) ds \nonumber \\
& \leq 
2 \Psi(R) \e^N_B(f,f) 
 = 2 \Psi(R) \int_B d\Gamma(f,f).
\end{align}
To see the last inequality, observe that $s \to \e^N_B( P^N_{B,s} f, P^N_{B,s} f )$ is a non-increasing function. This can be proved by noting that
\[ \e^N_B( P^N_{B,s} f, P^N_{B,s} f ) = - \langle L^N_B P^N_{B,s} f, P^N_{B,s} f \rangle = \Vert (L^N_B)^{1/2} P^N_{B,s} f \Vert^2_2. \]
From \eqref{eq:5.5.6} and \eqref{eq:5.5.7} we obtain
 \[  \int_{B(x,\epsilon R)}  |f(z)-f_{B(x,\epsilon R)}|^2 \mu(dz)
 \leq \frac{2}{c(\epsilon)} \Psi(R) \int_B d\Gamma(f,f). \]
We have proved that VD and w-HKE($\Psi$) imply PI($\Psi$).

Now RES($\Psi$) follows from VD and CSA($\Psi$) together with PI($\Psi$) by the same arguments as in \cite[Lemma 5.1]{BaBa04}.
\end{proof}

\section{Inner uniform domains and Green's function estimates} \label{sec:GF}

\subsection{(Inner) uniformity}  \label{ssec:uniform domains}
For the rest of this paper, we let $(X,d,\mu,\e, \F)$ be a MMD space that satisfies VD and w-HKE($\Psi$).
From now on, $(X,d)$ is always assumed to be geodesic. That is, for any $x,y \in X$ there is a continuous map $\gamma:[0,1] \to X$ such that $\gamma(0)=x$, $\gamma(1)=y$ and $d(\gamma(s),\gamma(t)) = |s-t| d(x,y)$ for all $s,t \in [0,1]$. 

In this Section we discuss inner uniformity conditions and some properties of inner uniform domains. Section \ref{ssec:uniform domains} is nearly identical to the corresponding section in \cite{LierlSC1}. We include it for the convenience of the reader and due to its importance for the understanding of the main result.

Let $\Omega \subset X$ be open and connected.
The \emph{inner metric} on $\Omega$ is defined as
 \[ d_{\Omega}(x,y) = \inf \big\{ \mbox{length}(\gamma) \big| \gamma:[0,1] \to \Omega \mbox{ continuous}, \gamma(0) = x, \gamma(1) = y \big\}, \]
where 
\[ \mbox{length}(\gamma) = \sup \left\{ \sum_{i=1}^n d(\gamma(t_i),\gamma(t_{i-1})) : n \in \mathbb{N}, 0 \leq t_0 < \ldots < t_n \leq 1 \right\}. \]
Let $\widetilde \Omega$ be the completion of $\Omega$ with respect to $d_{\Omega}$, and $\partial_{\widetilde \Omega} \Omega = \widetilde\Omega \setminus \Omega$ the boundary. 
For $x \in \widetilde\Omega$, we will consider the \emph{inner balls} $B_{\widetilde \Omega}(x,r) := \{ y \in \widetilde \Omega : d_{\Omega}(x,y)<r \}$ and $B_{\Omega}(x,r) := B_{\widetilde\Omega}(x,r) \cap \Omega$.

For an open set $B \subset \Omega$, let $\overline{B}^{d_{\Omega}}$ be its completion with respect to the metric $d_{\Omega}$, and let $\partial_{\widetilde \Omega} B = \overline{B}^{d_{\Omega}} \setminus B$ be its boundary. Let $\partial_{\Omega} B = \partial_{\widetilde \Omega} B \cap \Omega$ be the part of the boundary that lies in $\Omega$.
Observe that $\partial_{\Omega} B = \partial_X B \cap \Omega$.

For a set $V \subset \Omega$ let $\mbox{diam}_{\Omega}(V):= \sup_{x,y \in V} d_{\Omega}(x,y)$ be the diameter of $V$ with respect to the inner metric $d_{\Omega}$.

\begin{definition} \label{def:uniformity}
\begin{enumerate}
\item 
Let $\gamma: [\alpha,\beta] \to \Omega$ be a rectifiable curve in $\Omega$ and let $c \in (0,1)$, $C \in (1,\infty)$. We call $\gamma$ a $(c,C)$-\emph{uniform curve} in $\Omega$ if
\begin{equation} \label{eq:def uniform curve}
d \big( \gamma(t), \overline \Omega \setminus \Omega \big) \geq c \cdot \min \left\{ d \big( \gamma(\alpha), \gamma(t) \big) , d \big( \gamma(t), \gamma(\beta) \big) \right\},    \quad \forall t \in [\alpha,\beta],
\end{equation}
and if
\[ \mbox{length}(\gamma) \leq C \cdot d \big( \gamma(\alpha), \gamma(\beta) \big). \]
The domain $\Omega$ is called $(c,C)$-\emph{uniform} if any two points in $\Omega$ can be joined by a $(c,C)$-uniform curve in $\Omega$.
\item
\emph{Inner uniformity} is defined analogously by replacing the metric $d$ on $X$ with the inner metric $d_{\Omega}$ on $\Omega$. 
\item
The notion of \emph{(inner) $(c,C)$-length-uniformity} is defined analogously by replacing the right hand side of \eqref{eq:def uniform curve} by $c \cdot \min\{\mbox{length} (\gamma\big|_{[a,t]}),\mbox{length} (\gamma\big|_{[t,b]}) \}$.
\end{enumerate}
\end{definition}

The following proposition is an easy consequence of \cite[Proposition 3.3 and Theorem 3.7]{GyryaSC}. See also \cite[Lemma 2.7]{MS79}.

\begin{proposition} 
Assume $(X,d,\mu)$ satisfies VD. Then a connected open subset $\Omega \subset X$ is (inner) uniform if and only if it is (inner) length-uniform.
\end{proposition} 

\begin{lemma} \label{lem:x_r} 
Let $\Omega$ be a $(c_u,C_u)$-inner uniform domain in $(X,d)$.
For every inner ball $B = B_{\widetilde \Omega}(x,r)$ with minimal radius (i.e.~$B \neq B_{\widetilde \Omega}(x,R)$ for all $R >r$), there exists a point $x_r \in B$ with $d_{\Omega}(x,x_r) = r/4$ and $d_{\Omega}(x_r,\widetilde \Omega \setminus \Omega) \geq c_ur/8$.
\end{lemma}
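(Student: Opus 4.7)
My plan is to exploit the minimality of the radius to find a point $y$ in $B$ whose inner distance to $x$ is almost $r$, approximate $x$ itself by a point $x'\in\Omega$ (it might lie in $\tilde\Omega\setminus\Omega$), join $x'$ to $y$ by an inner uniform curve, and use the intermediate value theorem to pick off a point on that curve at inner distance exactly $r/4$ from $x$. The inner uniformity condition applied at this point will then give the required lower bound on $\delta_\Omega$.

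In detail, I would fix a small $\epsilon\in(0,1/8)$. Minimality says $B_{\tilde\Omega}(x,r(1-\epsilon))\subsetneq B_{\tilde\Omega}(x,r)$, so there is $\tilde y\in\tilde\Omega$ with $r(1-\epsilon)\le d_\Omega(x,\tilde y)<r$. Since $\Omega$ is dense in $(\tilde\Omega,d_\Omega)$, I may perturb $\tilde y$ slightly to obtain $y\in\Omega$ satisfying $d_\Omega(x,y)>r(1-2\epsilon)$ and $d_\Omega(x,y)<r$. Similarly, choose $x'\in\Omega$ with $d_\Omega(x,x')<\epsilon r$ (take $x'=x$ if $x\in\Omega$). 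Using the $(c_u,C_u)$-inner uniformity of $\Omega$, I connect $x'$ and $y$ by a uniform curve $\gamma:[0,L]\to\Omega$ parametrized by arc length.

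The function $s\mapsto d_\Omega(x,\gamma(s))$ is $1$-Lipschitz; it is less than $\epsilon r<r/4$ at $s=0$ and exceeds $r(1-2\epsilon)-\epsilon r>r/4$ at $s=L$. By the intermediate value theorem there exists $s_\ast$ with $d_\Omega(x,\gamma(s_\ast))=r/4$; set $x_r:=\gamma(s_\ast)\in B$. The triangle inequality then gives
\[
 d_\Omega(x',x_r)\ge r/4-\epsilon r,\qquad d_\Omega(x_r,y)\ge r(1-2\epsilon)-r/4=3r/4-2\epsilon r,
\]
and the inner uniformity of $\gamma$ yields
\[
 \delta_\Omega(x_r)\ge c_u\min\{d_\Omega(x',x_r),d_\Omega(x_r,y)\}\ge c_u r(1/4-\epsilon).
\]
Choosing $\epsilon$ small enough (e.g.~$\epsilon\le 1/16$) makes the right-hand side at least $c_u r/8$.

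Finally, I must translate this $\delta_\Omega$-bound into the claim $d(x_r,\tilde\Omega\setminus\Omega)\ge c_u r/8$. This uses the natural map $\tilde\Omega\setminus\Omega\to\partial\Omega\subset X$ induced by $d\le d_\Omega$ (any $d_\Omega$-Cauchy sequence in $\Omega$ is $d$-Cauchy in $X$ and its limit lies in $\partial\Omega$ once it exits $\Omega$); this identification gives $d(x_r,\tilde\Omega\setminus\Omega)\ge d(x_r,\partial\Omega)=\delta_\Omega(x_r)$. The only real nuisance in the argument is making sure all the auxiliary points can be chosen in $\Omega$ rather than only in $\tilde\Omega$, and checking that the inner uniformity definition—which a priori concerns curves joining points of $\Omega$—transfers cleanly to the present setup with $x\in\tilde\Omega$; both issues are handled by the approximation step at the start.
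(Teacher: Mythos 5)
Your argument is correct and is essentially the standard one: the paper itself only cites \cite{pthesis}[Lemma 3.20] here, and that proof proceeds exactly as you do---use minimality of the radius to produce a point at inner distance nearly $r$, approximate both endpoints in $\Omega$, run a $(c_u,C_u)$-inner uniform curve between them, and select the point at inner distance exactly $r/4$ by the intermediate value theorem. The one delicate step, passing from the bound on $\delta_\Omega(x_r)=d(x_r,\partial\Omega)$ supplied by the inner uniformity condition to the stated bound on $d(x_r,\tilde\Omega\setminus\Omega)$, you handle correctly via the $1$-Lipschitz projection of $\tilde\Omega\setminus\Omega$ into $\partial\Omega$.
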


\begin{proof} See \cite[Lemma 3.20]{GyryaSC}.
\end{proof}

The next lemma is crucial for the proof of the boundary Harnack principle on inner uniform domains rather than uniform domains.

Let $p:\widetilde\Omega \to \overline{\Omega}$ be the natural projection into the closure $\overline{\Omega}$ of $\Omega$ in $(X,d)$, namely the unique continuous map such that $p|_{\Omega}$ is the identity map on $\Omega$. For any $x \in \widetilde {\Omega}$ and any ball $D=B(p(x),r)$, let $D'$ be the unique connected component of $p^{-1}(D \cap \overline{\Omega})$ that contains $x$. 
We identify the subset $D'\cap p^{-1}(\Omega)$ of  $(\widetilde \Omega,d_{\Omega})$ with the subset $p(D') \cap \Omega $ of $ (X,d)$ and simply denote it by $D' \cap \Omega$.
 It follows that $D'\cap \Omega$ is the connected component of $D\cap \Omega$ whose closure in $\widetilde{\Omega}$ contains $x$.

\begin{lemma} \label{lem:metrics are comparable}
Suppose $\mu$ has the volume doubling property on $X$. 
Let $\Omega$ be a $(c_u,C_u)$-inner uniform domain in $(X,d)$.
Then there exists a constant $C_{\Omega} \in (0,\infty)$ such that for any ball $D = B(p(x),r/C_{\Omega})$ with $x \in \widetilde\Omega$,
 \[ B_{\widetilde\Omega}(x,r/C_{\Omega}) \subset D' \subset B_{\widetilde\Omega}(x, r). \]
The constant $C_{\Omega}$ depends only on the volume doubling constant $C_{\mbox{\em \tiny{VD}}}$ and on the inner uniformity constants $c_u$, $C_u$ of $\Omega$.
\end{lemma}

\begin{proof}
The proof is given in \cite[Lemma 3.7]{LierlSC1} and follows earlier results for domains in Euclidean space, \cite[Lemma 2.2]{ALM03} and \cite[(5.5), (5.6) and the comment thereafter]{Anc07}.
\end{proof}

\begin{remark}
In the two above lemmas, the hypothesis that the whole domain $\Omega$ is $(c_u,C_u)$-inner uniform can be relaxed to the hypothesis that any two points in $B_{\Omega}(x, r)$ can be connected by a path that is $(c_u,C_u)$-inner uniform in $\Omega$. See \cite[Remark 3.8(ii)]{LierlSC1}.
\end{remark}

Fix $c_u \in (0,1)$ and $C_u \in (1,\infty)$. Let $A_3 = 2(12 + 12C_u)$, $A_0 = A_3 + 7$. We will use the notation $a \vee b = \max\{a,b\}$ and $a \wedge b = \min\{a,b\}$ for $a,b \in \R$.

\begin{definition} \label{def:R_xi}
For $\xi \in \partial_{\widetilde{\Omega}} \Omega$, let $R_{\xi}$ be the largest radius so that 
\begin{enumerate}
\item
$B(\xi,2(A_0 \vee (6/c_u + 4)) R_{\xi}) \subsetneq X$
\item
$\frac{8}{c_u} R_{\xi} < \frac{1}{2} \mbox{diam}_{\Omega}(\Omega)$,
if $\mbox{diam}_{\Omega}(\Omega) < \infty$,
%
\item
Any two points in $B_{\widetilde \Omega}\left(\xi,\left(A_0 + \frac{8}{c_u}\right) R_{\xi} \right)$ can be connected by a curve that is $(c_u,C_u)$-inner uniform in $\Omega$. 
\end{enumerate}
\end{definition}

The condition $R_{\xi}>0$ for some boundary point $\xi$ can be understood as a \emph{local} inner uniformity condition.

\begin{remark}
An inner uniform domain $\Omega$ clearly satisfies $R_{\xi}>0$ for all boundary points $\xi \in \widetilde\Omega \setminus \Omega$. The converse is not true.
\end{remark}

\subsection{Green's function estimates}  \label{ssec:GF's estimates}
Recall that $(X,d,\mu,\e,\F)$ is a MMD space that satisfies VD and w-HKE($\Psi$)  and $d$ is geodesic.

\begin{definition} Let $V$ be an open subset of $U$. Set
\begin{align*}
 \F^0_{\mbox{\tiny{loc}}}(U,V) = & \{ f \in L^2_{\mbox{\tiny{loc}}}(V,\mu): \forall \mbox{ open } A \subset V \mbox{ rel.~compact in } \overline{U} \mbox{ with } \\
& \, d_{U}(A,U \setminus V) > 0, \, \exists f^{\sharp} \in \F^0(U): f^{\sharp} = f \, \mu \mbox{-a.e.~on } A \}.
\end{align*}
\end{definition}
Note that $\F^0_{\mbox{\tiny{loc}}}(U,V) \subset \F_{\mbox{\tiny{loc}}}(V)$. If $V=U$, we simply write $\F^0_{\mbox{\tiny{loc}}}(U)$ for $\F^0_{\mbox{\tiny{loc}}}(U,U)$.

\begin{definition}
Let $V \subset U$ be open. We say that a harmonic function $u: V \to \R$ satisfies
\emph{Dirichlet boundary condition} along the boundary of $U$, if
 \[ u \in \F^0_{\mbox{\tiny{loc}}}(U,V). \]
\end{definition}

\begin{remark}
To be precise, a harmonic function $u \in \F^0_{\mbox{\tiny{loc}}}(U,V)$ satisfies Dirichlet boundary condition along  $\widetilde V^{d_U} \setminus U$, where $\widetilde V^{d_U}$ is the completion of $V$ with respect to $d_U$.
\end{remark}

\begin{theorem} \label{thm:basic p^D_B estimate}
Let $B=B(a,R)$ be a ball with $B(a,KR) \subsetneq X$ for some $K>1$.
For any fixed $\epsilon \in (0,1)$ there are constants $c_2, C_2 \in (0,\infty)$ such that for any $x,y \in B(a,R)$,
$t \geq \epsilon \Psi(R)$, the Dirichlet heat kernel $ p^D_B$ is bounded above by
\begin{align*} 
  p^D_B(t,x,y) & \leq \frac{C_2}{V(x,R)} \exp( - c_2 t / \Psi(R) ).
\end{align*}
The constants $c_2,C_2$ depend only on $K$, $C_{\mbox{\em \tiny{VD}}}$, the constants in \emph{w-HKE($\Psi$)}, and $\beta_1$, $\beta_2$, $C_{\Psi}$ in \eqref{eq:Psi beta}. They are independent of $a$ and $R$.
\end{theorem}

\begin{proof}
Follows from changing notation in \cite[Lemma 5.13 part 3]{HSC01}. 
\end{proof}

For a non-empty open set $V \subset X$, the Green function $G_V:V \times V \to [0,\infty]$ is defined by
\[ G_V(x,y) := \int_0^{\infty} p^D_V(t,x,y) dt, \quad x,y \in V. \]
In the present setting, the heat kernel $p^D_V:(0,\infty) \times V \times V \to [0,\infty)$ is continuous by Theorem \ref{thm:w-HKE w-PHI}.

\begin{lemma} \label{lem:4.7}
Let $V \subset X$ be open relatively compact. When $\mbox{\em diam}_d(X)< \infty$, we require in addition that $\mbox{\em diam}_d(V) \leq \frac{1}{2} \mbox{\em diam}_d(X)$. Then for any fixed $x \in V$, the Green function $y \mapsto G_V(x,y)$ is continuous and harmonic on $V \setminus \{x \}$ and belongs to $\F^0_{\mbox{\emph{\tiny{loc}}}}(V,V\setminus\{x\})$.
\end{lemma}

\begin{proof}
We follow the line of reasoning in \cite[Proof of Lemma 4.7]{GyryaSC}. Since we do not have a carr\'e du champ, we use CSA($\Psi$) to control the energy of cutoff functions.

Let $\Omega \subset V$ be open and relatively compact in $\overline{V}$ with $d(\Omega,x) > 0$. Pick finitely many balls $B_i = B(z_i,s_i)$ so that $\Omega \subset \bigcup_i B_i$ and $K = \bigcup_i \overline{B(z_i,4s_i)} \subset X \setminus \{ x \}$. For each $i$, there exists by CSA($\Psi$) a continuous cut-off function  $\psi_i$ with $\psi = 1$ on $B_i$ and $\psi_i = 0$ on $X \setminus B(z_i,2s_i)$. Let $\psi(y) = \min\{1,\sum_i \psi_i(y)\}$ for all $y \in K$, and $f^{\sharp} = \psi G_V(x,\cdot)$. Then $f^{\sharp} = G_V(x,\cdot)$ on $\Omega$. We show that $f^{\sharp}$ is in $\F^0(V)$.

Let $B = B(x,R)$ with $R$ large enough so that $V \subset B$. Since $\mbox{diam}_d(V) \leq \frac{1}{2} \mbox{diam}_d(X)$, we can choose $B$ in such a way that $B(x,\kappa R) \subsetneq X$ for some $\kappa > 1$, so that Theorem \ref{thm:basic p^D_B estimate} is applicable.

Recall that the map $y \mapsto p^D_V(t,x,y)$ is in $\F^0(V)$. The upper bound in w-HKE($\Psi$) and the set monotonicity of the Dirichlet heat kernel, $p^D_V(t,x,y) \leq p^D_B(t,x,y) \leq p(t,x,y)$, provide an upper bound on $p^D_V(t,x,y)$. Integrating over time we obtain that $\psi G_V(x,\cdot) \in L^2(X,\mu)$. By Theorem \ref{thm:basic p^D_B estimate}, there are constants $c, C(B) \in (0,\infty)$ such that for all $t \geq \Psi(R)$ and $y,z \in V$,
\begin{align} \label{eq:4.3}
p^D_V(t,y,z) \leq C(B) e^{-c t / \Psi(R)}.
\end{align}
Moreover, it follows from VD and w-HKE($\Psi$) that there are constants $c',C' \in (0,\infty)$ depending on $K$, $\beta_1$, $\beta_2$, $C_{\Psi}$, and the constants in VD and w-HKE($\Psi$) such that for all $0<t<\Psi(R)$ and $y \in V \cap K$,
\begin{align} \label{eq:4.4}
p^D_V(t,x,y) \leq C' e^{-c' t^{-\frac{1}{\beta_2-1}}}.
\end{align}
This shows that the integral $\psi G_V(x,\cdot) = \int_0^{\infty} \psi p^D_V(t,x,\cdot)dt$ converges at $0$ and $\infty$ in $L^2(X,\mu)$. Hence $\psi G_V(x,\cdot)$ is in $L^2(X,\mu)$.

For fixed $0 < a < b < \infty$, set $g = \int_a^b p^D_V(t,x,\cdot) dt$ and observe that $\psi g, \psi^2 g \in \F^0(V)$. 
Thus, applying CSA($\Psi$), we obtain
\begin{align*}
\int_V d\Gamma(\psi g, \psi g) 
& \leq  \int_{K \cap V} d\Gamma(g,g \psi^2)  +  \int_{K \cap V} g^2 d\Gamma(\psi,\psi) \\
& \leq \int_{K \cap V} d\Gamma(g,g \psi^2) + C \sum_i \int_{B(z_i,2s_i) \cap V} g^2 d\Gamma(\psi_i,\psi_i)  \\
& \leq C' \int_{K \cap V} (-L^D_V g) g \psi^2 \, d\mu + C \sup_i \Psi(2s_i)^{-1} \int_{K \cap V} g^2 d\mu \\
& \leq C' \int_{K \cap V} g \big( p^D_V(a,x,\cdot) - p^D_V(b,x,\cdot) \big) d\mu + C \int_{K \cap V} g^2 d\mu \\
& \leq C' \int_{K \cap V} g \, p^D_V(a,x,\cdot) d\mu 
   + C \int_{K \cap V} g^2 d\mu.
\end{align*}
The constants $C$ and $C'$ change from line to line and depend on $\Omega$, $\Psi$ and the constant $C_S$ from CSA($\Psi$).
Now, observe that \eqref{eq:4.3}-\eqref{eq:4.4} imply that 
\[ \int_{K \cap V} g^2 d\mu = \int_{K \cap V} \left(\int_a^b p^D_V(t,x,\cdot) dt\right)^2 d\mu \]
tends to $0$ when $a$, $b$ tend to infinity or when $a$, $b$ tend to $0$ (this is the same argument we used above to show that $\psi G_V(x,\cdot)$ is in
$L^2(X,\mu)$). The estimates \eqref{eq:4.3}-\eqref{eq:4.4} also imply that $\int_{K \cap V} g p^D_V(a,x,\cdot) d\mu$ tends to $0$ when $a$, $b$ tend to
infinity or when $a$, $b$ tend to $0$. This implies that the integral $\psi G_V(x,y) = \psi \int_0^{\infty} p^D_V(t,x,\cdot)dt$ converges in $\F^0(V)$
as desired.

Next, we show that $G_V(x,\cdot)$ is harmonic on $V \setminus \{ x \}$.
Let $\phi \in D(L^D_V)$ with compact support in $V \setminus \{ x \}$. 
Let $g := \int_0^b p^D_V(t,x,\cdot) dt$.
By w-HKE($\Psi$) and \eqref{eq:4.3}, $p^D_V(b,x,y)$ tends to $0$ as $b \to \infty$ uniformly in $y$.
Hence,
\begin{align*}
\e(G_V(x,\cdot),\phi)
& = - \int \lim_{b \to \infty} g L^D_V \phi d\mu
 = - \lim_{b \to \infty} \int (L^D_V g)  \phi d\mu  \\
& = - \lim_{b \to \infty} \int p^D_V(b,x,\cdot) \phi d\mu
 = 0.
\end{align*}
Since $D(L^D_V)$ is dense in $L^2(V)$, we obtain
$\e(G_V(x,\cdot),\phi) = 0$ for all $\phi \in \F_{\mbox{\tiny{c}}}(V \setminus \{ x \})$.

Finally, we show that $G_V(\cdot,\cdot)$ is jointly continuous on $V \times V \setminus \{(y,y) | y \in V \}$. 
Let $x \in V$. For every $\epsilon >0$ there exists $b > 0$ such that, by \eqref{eq:4.3},
\[ \int_b^{\infty} p(t,x,y) - p(t,x,z) dt \leq \epsilon,
\quad \forall y,z \in V \setminus \{ x \}. \]
Since the Dirichlet heat kernel $p(t,x,y)$ is jointly continuous on $(0,\infty) \times V \times V$, there exists $\delta > 0$ so that for all $z \in V \setminus \{ x \}$ with $d(z,y) < \delta$, we have
\[ \int_0^b p(t,x,y) - p(t,x,z) dt  < \epsilon. \]
This finishes the proof. 
\end{proof}

\begin{lemma} \label{lem:4.8}
Let $B(z,2R) \subsetneq X$. 
\begin{enumerate}
\item
Fix $\theta \in (0,1)$.
There is a constant $C_1$ depending only on $\theta$, $\beta_1$, $\beta_2$, $C_{\Psi}$, and the constants in \emph{VD} and \emph{w-HKE($\Psi$)}, such that for any $x,y \in B(z,R)$ with $d(x,y) \geq \theta R$,
\begin{equation} \label{eq:GF upper estimate in a ball}
  G_{B(z,R)}(x,y) \leq C_1 \frac{ \Psi(R) }{ V(x,R) }.
\end{equation}
\item
Fix $\theta \in (0,1)$. There is a constant $C_2$ depending only on $\theta$, $\beta_1$, $\beta_2$, $C_{\Psi}$ and the constants in \emph{VD} and \emph{w-HKE($\Psi$)}, such that
\begin{equation} \label{eq:GF lower estimate in a ball}
 \forall x,y \in B(z,\theta R), \quad  G_{B(z,R)}(x,y) \geq C_2 \frac{\Psi(R)}{ V(x,R) }.
\end{equation}
\end{enumerate}
\end{lemma}

\begin{proof}
For $\theta \leq \epsilon$, where $\epsilon$ is given by w-LLE($\Psi$), the lower bound \eqref{eq:GF lower estimate in a ball} follows from integrating \eqref{eq:LLE} from $t=\Psi\left(\frac{\theta}{\epsilon} R \right)$ to $t=\Psi(R)$ and applying VD and

For the upper bound \eqref{eq:GF upper estimate in a ball} we use the upper bound in w-HKE($\Psi$) together with the set monotonicity of the Dirichlet heat kernel, and the estimate of Theorem \ref{thm:basic p^D_B estimate},
\begin{align*}
p^D_B(t,x,y) & \leq \begin{cases}    \frac{C_3}{V(x,\Psi^{-1}(t))}  \exp \left( - c_3 \left( \frac{\Psi(d(x,y))}{t} \right)^{1/(\beta_2-1)}\right), 
\quad                                   & t < \frac{1}{2} \Psi(R), \\
                                   \frac{C_2}{V(x,R)} \exp( - c_2 t / \Psi(R) ), \quad  & t \geq \frac{1}{2} \Psi(R),
                    \end{cases}
\end{align*}
where $B = B(z,R)$.
Therefore,
\begin{align*}
\int_{\frac{1}{2}\Psi(R)}^{\infty} p^D_B(t,x,y) dt
&\leq  \int_{\frac{1}{2}\Psi(R)}^{\infty} \frac{C_2}{V(x,R)} \exp( - c_2 t / \Psi(R) ) dt
\leq C \frac{ \Psi(R) }{ V(x,R) },
\end{align*}
and
\begin{align*}
& \int_0^{\frac{1}{2}\Psi(R)} p^D_B(t,x,y) dt \\
&\leq \int_0^{\frac{1}{2}\Psi(R)} \frac{C_3}{V(x,\Psi^{-1}(t))}  \exp \left( - c_3 \left( \frac{\Psi(d(x,y))}{t} \right)^{1/(\beta_2-1)}\right) dt \\
&\leq \frac{C_3}{V(x,R)}\int_0^{\frac{1}{2}\Psi(R)} \frac{V(x,R)}{V(x,\Psi^{-1}(t))}  \exp \left( - c_3 \left( \frac{\Psi(\theta R)}{t} \right)^{1/(\beta_2-1)}\right) dt.
\end{align*}
Due to VD and \eqref{eq:Psi beta}, the integrand is bounded by a constant independent of $R$. Hence, the right hand side is less than or equal to $C \frac{ \Psi(R) }{ V(x,R) }$.
Finally,
\begin{align*}
G_{B(z,R)}(x,y) 
&\leq \int_0^{\frac{1}{2}\Psi(R)} p^D_B(t,x,y) dt + \int_{\frac{1}{2}\Psi(R)}^{\infty} p^D_B(t,x,y) dt 
&\leq C \frac{ \Psi(R) }{ V(x,R) }.
\end{align*}
\end{proof}

\begin{lemma} \label{lem:4.9}
Fix $\theta \in (0,1)$. Let $U \subset X$ be open. Let  $z \in U$, $R>0$ so that $B(z,2R) \subsetneq X$. Set $G_{B_U(z,R)}(x,y)=0$ when $x \notin B_U(z,R)$ or $y \notin B_U(z,R)$.
\begin{enumerate}
\item
There is a constant $C_1$ depending only on $\theta$, $\beta_1$, $\beta_2$, $C_{\Psi}$ and  the constants in \emph{VD} and \emph{w-HKE($\Psi$)}, such that 
\begin{equation} \label{eq:GF upper estimate in U cap B}
 G_{B_U(z,R)}(x,y) \leq G_{U \cap B(z,R)}(x,y) \leq C_1 \frac{ \Psi(R) }{ V(x,R) },
\end{equation}
for all $x,y \in U \cap B(z,R)$ with $d(x,y) \geq \theta R$. 
\item
Let $\delta \in (0,1/3)$. Suppose any two points in $B_U(z,\delta R)$ can be 
connected by a $(c_u,C_u)$-inner uniform curve in $U$. Then there is a constant $C_2$ depending only on $c_u$, $C_u$, $\theta$, $\beta_1$, $\beta_2$, $C_{\Psi}$ and the constants in \emph{VD} and \emph{w-HKE($\Psi$)}, such that 
\begin{equation} \label{eq:GF lower estimate in U cap B}
 G_{B_U(z,R)}(x,y) \geq C_2 \frac{ \Psi(R) }{ V(x,R) },
\end{equation}
for all $x,y \in B_U(z, \delta R)$ with $d(x, X \setminus U)$, $d(y, X \setminus U) \in (\theta R, \infty)$ and $d_U(x,y) \leq \delta R / C_u$.
\end{enumerate}
\end{lemma}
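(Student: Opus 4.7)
Part (i) is immediate from set monotonicity of the Dirichlet Green's function and Lemma~\ref{lem:4.8}(i). Since $d \leq d_U$, the inclusions $B_U(z,R) \subset U \cap B(z,R) \subset B(z,R)$ give
\[ G_{B_U(z,R)}(x,y) \leq G_{U \cap B(z,R)}(x,y) \leq G_{B(z,R)}(x,y), \]
and Lemma~\ref{lem:4.8}(i), applicable since $d(x,y) \geq \theta R$, bounds the last quantity by $C_1 \Psi(R)/V(x,R)$.

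For part (ii), my plan is to exhibit an ordinary Euclidean ball $B(w,s) \subset B_U(z,R)$ with $s$ proportional to $R$ (the proportionality constants depending only on $\theta, c_u, C_u$) such that $x,y \in B(w, \eta s)$ for some $\eta < 1$. Set monotonicity will then give $G_{B_U(z,R)}(x,y) \geq G_{B(w,s)}(x,y)$, Lemma~\ref{lem:4.8}(ii) with parameter $\eta$ will yield $G_{B(w,s)}(x,y) \geq C\,\Psi(s)/V(w,s)$, and volume doubling together with the doubling of $\Psi$ will convert this into $\gtrsim \Psi(R)/V(x,R)$. The geometric fact I will exploit is that $B(x,\theta R) \subset U$ (from $d(x, X \setminus U) > \theta R$), and since $(X,d)$ is geodesic, any $d$-geodesic from $x$ to a point $p \in B(x,\theta R)$ has length $d(x,p)$ and stays inside $B(x,\theta R) \subset U$; hence $d_U(x,p) = d(x,p)$ on $B(x,\theta R)$. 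The triangle inequality in $d_U$ then gives $d_U(z,p) < \delta R + d(x,p)$ for $p \in B(x,\theta R)$, so $B(x,s) \subset B_U(z,R)$ whenever $s \leq \min(\theta, 1-\delta)R$. When $\theta > 2/(3C_u)$, the bound $d(x,y) \leq d_U(x,y) \leq \delta R/C_u$ is less than $\min(\theta, 1-\delta)R/2$ uniformly in $\delta \in (0, 1/3)$, so the choice $w = x$, $\eta = 1/2$ works and Lemma~\ref{lem:4.8}(ii) applies directly.

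When $\theta \leq 2/(3C_u)$, a single ball centered at $x$ may be too small to contain $y$, and I will chain along the $(c_u, C_u)$-inner uniform curve $\gamma$ from $x$ to $y$, whose length is at most $C_u \, d_U(x,y) \leq \delta R$. I will cover $\gamma$ by a chain of Euclidean balls $B(w_i, r_i) \subset B_U(z,R)$, $i = 0, \ldots, N$, with $w_i \in \gamma$, radii $r_i \asymp \theta R$, consecutive balls overlapping, and $w_0$ at $d$-distance $\approx \theta R/4$ from $x$ so that $x$ lies outside every $B(w_i, 2r_i)$. The single-ball argument of the preceding paragraph applied to the pair $(x, w_0)$ gives the starting estimate $G_{B_U(z,R)}(x, w_0) \gtrsim \Psi(R)/V(x,R)$. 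The elliptic Harnack inequality (an immediate consequence of \emph{PHI}$(\Psi)$) applied to the positive function $p \mapsto G_{B_U(z,R)}(x, p)$, which is $L$-harmonic on $B_U(z,R) \setminus \{x\}$, then loses only a bounded factor at each of the $N$ chain steps, and $N$ is bounded in terms of $\theta, c_u, C_u$ since $\gamma$ has length $\leq \delta R \leq R/3$ and the balls have radii $\asymp \theta R$. The hard part will be the chain construction itself: the inner uniform property of $\gamma$ is a $d_U$-statement, while Lemma~\ref{lem:4.8}(ii) and Harnack apply to Euclidean balls, and the bridge is the geodesic property of $(X,d)$ together with Lemma~\ref{lem:metrics are comparable}. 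Verifying that every $B(w_i, 2r_i)$ sits in $B_U(z,R)$, stays clear of a neighborhood of $x$ (so the Green's function is harmonic on it), and yields a uniformly bounded count $N$ is the technical heart of the argument.
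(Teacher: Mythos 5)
Your proposal is correct and takes essentially the same approach as the paper: part (i) is set monotonicity plus Lemma \ref{lem:4.8}(i), and part (ii) reduces, via a Harnack chain along the inner uniform curve (which stays at distance at least of order $c_u\theta R$ from $X\setminus U$), to the single-ball lower bound of Lemma \ref{lem:4.8}(ii) applied through set monotonicity and volume doubling. The paper performs the identical reduction, merely omitting your explicit case split on whether a single ball centered at $x$ already contains $y$.
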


\begin{proof}
We follow the line of reasoning of \cite[Lemma 4.9]{GyryaSC}. 
(i) Set $B = B(z,R)$, $W = U \cap B(z,R)$. The upper bound \eqref{eq:GF upper estimate in U cap B} follows easily from Lemma \ref{lem:4.8} and the monotonicity inequality $G_W \leq G_B$. 
(ii) By assumption, there is an $\epsilon_1 > 0$ such that, for any $x,y \in B_U(z,\delta R)$ satisfying
$d(x,X \setminus U)$, $d(y,X \setminus U) > \theta R$, there is a path in $U$ from $x$ to $y$ of length less than
$C_u d_U(x,y) \leq \delta R$ that stays at distance at least $\epsilon_1 R$ from $X \setminus U$. Since $x,y \in B_U(z,\delta R)$ and $\delta < 1/3$, this path is contained in $B_U(z,R) \cap \{ \zeta \in U : d(\zeta,X \setminus U) > \epsilon_1 R \}$.
Using this path, the elliptic Harnack inequality EHI easily reduces the lower bound \eqref{eq:GF lower estimate in U cap B} to
the case when $y$ satisfies $d(x,y) = \eta R$ for some arbitrary fixed $\eta \in (0,\epsilon_1)$ small enough.
Pick $\eta > 0$ so that, under the hypotheses of the lemma, the ball $B_x = B(x,2\eta R)$ is
contained in $B_U(z,R)$. Let $W=B_U(z,R)$. Then the monotonicity property of Green functions implies that $G_W(x,y) \geq G_{B_x}(x,y)$. Lemma \ref{lem:4.8} and the volume doubling property then yield
 $G_W(x,y) \geq C_2 \frac{\Psi(R)}{V(x,R)}$.
This is the desired lower bound.
\end{proof}

\subsection{The harmonic measure}
We continue to work under the assumption that VD and w-HKE($\Psi$) are satisfied.

By \cite[Theorem 7.2.2]{FOT94}, the locality of $(\e,\F)$ implies that there exists a diffusion $((\mathcal{X}_t)_{t \geq 0}, (\mathbb{P}_x)_{x \in X}, (\F_t)_{t \geq 0})$ associated with $(\e,\F)$.

The following proposition does not require VD or w-HKE($\Psi$) to hold, as long as the heat kernel exists and is jointly continuous on $(0,\infty) \times X \times X$.
\begin{proposition}
The Markovian strongly continuous semigroup $(P_t)$, $t>0$, corresponding to $(\e,\F)$ has the Feller property.
In particular, the transition densities of the diffusion process $((\mathcal{X}_t)_{t \geq 0}, (\mathbb{P}_x)_{x \in X}, (\F_t)_{t \geq 0})$ are given by the heat kernel.
\end{proposition}

\begin{proof}
Let $f \in C_{\infty}$, the space of continuous functions that vanish at infinity. Since $P_t f(x) = \int p(t,x,y) f(y) \mu(dy)$ for all $x \in X$, $t>0$, and $p(t,x,y)$ is jointly continuous on $(0,\infty) \times X \times X$, it follows easily that, for any $t >0$, $x \mapsto P_t f(x)$ is continuous.

It remains to show that $\lim_{t \to 0} P_tf(x) = f(x)$ for all $x \in X$.
Observe that $f$ is bounded and uniformly continuous.
Let $\epsilon>0$ and $\delta>0$ so that $|f(y) -f(x)|<\epsilon$ for all $x,y \in X$ with $d(x,y) < \delta$.

Let $t>0$, $x \in X$. Since $P_t 1 = 1$,
\begin{align*}
& |P_t f(x) - f(x)| \\
& =
\left|\int_X (f(y) - f(x)) p(t,x,y) \mu(dy)\right| \\
& \leq 
\int_{B(x,\delta)} |f(y) - f(x)| p(t,x,y) \mu(dy) + \int_{X \setminus B(x,\delta)} |f(y) - f(x)| p(t,x,y) \mu(dy) \\
& \leq
\epsilon  
+ \frac{2 c_1 ||f||_{\infty}}{V(x,\Psi^{-1}(t))} \int_{X \setminus B(x,\delta)} \exp \left(-c_2 \left( \frac{\Psi(d(x,y))}{t}\right)^{\frac{1}{\beta_2-1}} \right) \mu(dy) \\
& \leq
\epsilon  
+ {2 c_1 ||f||_{\infty}}{V(x,\Psi^{-1}(t))} \exp \left(- \frac{c_2}{2} \left( \frac{\Psi(\delta)}{t}\right)^{\frac{1}{\beta_2-1}} \right) \\
& \quad \int_{X \setminus B(x,\delta)}  \exp \left(- \frac{c_2}{2} \left( \frac{\Psi(d(x,y))}{t}\right)^{\frac{1}{\beta_2-1}} \right) \mu(dy).
\end{align*}
Since ${2 c_1 ||f||_{\infty}}{V(x,\Psi^{-1}(t))} \exp \left(- \frac{c_2}{2} \left( \frac{\Psi(\delta)}{t}\right)^{\frac{1}{\beta_2-1}} \right)$ tends to $0$ as $t \to 0$, it suffices to that that the integral on the right hand side is bounded.
For $t<1$, we have
\begin{align*}
& \int_{X \setminus B(x,\delta)}  \exp \left(- \frac{c_2}{2} \left( \frac{\Psi(d(x,y))}{t}\right)^{\frac{1}{\beta_2-1}} \right) \mu(dy) \\
& \leq
\sum_{n=1}^{\infty} \int_{B(x,2^n \delta) \setminus B(x,2^{n-1} \delta)}   \exp \left(- \frac{c_2}{2} \left( \Psi(2^{n-1}\delta)\right)^{\frac{1}{\beta_2-1}} \right) \mu(dy) \\
& \leq
V(x,\delta) \sum_{n=1}^{\infty}  2^n \exp \left( - c 2^n \right) < \infty,
\end{align*}
where $c$ depends on $\delta$, $\beta_1$, $\beta_2$, $C_{\Psi}$, $C_{\mbox{\tiny{VD}}}$.
Hence, $\lim_{t\to 0}P_tf(x) = f(x)$ for all $x \in X$. Since
\[ {V(x,\delta)}{V(x,\Psi^{-1}(t))} \leq C \left(\frac{\delta}{\Psi^{-1}(t)}\right)^{\nu}, \quad \forall x \in X \]
for some $\nu,C$ depending on $C_{\mbox{\tiny{VD}}}$, we see from the above argument that the convergence $P_tf \to f$ is uniform in $x$. This shows that $P_t f$ vanishes at infinity.
\end{proof}
%
Set

Let $B \subset X$ be open relatively compact in $V \subset X$. Let $U := X \setminus B$.

\begin{definition}
Let $\sigma_U := \inf \{ t>0 : \mathcal{X}_t \in U \}$ be the first hitting time of the set $U$.
The \emph{hitting distribution} $H_U$ is defined as
\begin{align*}
H_U(x,A) := \mathbb{P}_x[\mathcal{X}_{\sigma_U} \in A, \sigma_U < \infty], \qquad x \in X.
\end{align*}
\end{definition}

For $u \in \F_e$, let
\begin{align} \label{eq:H_Uf}
H_U \widetilde u(x) := \mathbb{E}_x[\widetilde u(\mathcal{X}_{\sigma_U}), \sigma_U < \infty], \qquad x \in X.
\end{align}

\begin{proposition} \label{prop:H_U q-cts}
For any $u \in \F_e$, $H_U \widetilde u$ is in $\F_e$, $H_U \widetilde u$ is quasi-continuous, and 
\[ \e(H_U \widetilde u,v) = 0, \qquad \forall v \in \F(B)_e. \]
\end{proposition}
\begin{proof}
See \cite[Theorem 4.6.5]{FOT94}.
\end{proof}

In view of Proposition \ref{prop:H_U q-cts}, it is not hard to derive a 0-order version of \cite[Lemma 4.5.1]{FOT94}. Hence, the locality of $(\e,\F_e)$ implies that the hitting distribution $H_U(x,A)$ is supported on the boundary $\partial_X B$ for q.e.~$x \in B$.
The measure
\[ \omega(x,A,B) := H_{X \setminus B}(x,A), \quad A \subset \partial_X B, \]
on $\partial_X B$ is called the \emph{harmonic measure}. 

\begin{lemma} \label{lem:harm meas cts}
Suppose that \mbox{\em w-HKE($\Psi$)} holds.
For any Borel measurable $A \subset \partial_X B$, the map
\[ x \mapsto \omega(x,A,B) \]
is continuous and harmonic on $B$. Moreover, for any $x \in X$, $\omega(x,\cdot,B)=0$ charges no set of zero capacity. 
\end{lemma}  
  
\begin{proof}
Denote the map $x \mapsto \omega(x,A,B)$ by $u$. 
First, show that $u|_B$ is in $\F_{\mbox{\tiny{loc}}}(B)$ and $u$ is harmonic by applying \cite[Theorem 6.7.11]{CF12}. It suffices to verify that $\{ u(\mathcal{X}_{t \wedge \sigma_U}), t \geq 0 \}$ is a uniformly integrable $\mathbb{P}_x$-martingale for q.e.~$x \in X$.

We have $\mathbb{P}_x[\sigma_U = 0]=1$ for all regular boundary points $x \in U$. The set of irregular points is exceptional by \cite[Theorem A.2.6(i) and Theorem 4.1.3]{FOT94}. Therefore,
\[ u(\mathcal{X}_{t \wedge \sigma_U}) = u(\mathcal{X}_0) = u(x) \qquad \mathbb{P}_x \mbox{-a.s.~for q.e.~} x \in U, \]
which is a $\mathbb{P}_x$-martingale.
Next, let $x \in B$ and $0 \leq s < t < \infty$.
Then
\begin{align*}
 \mathbb{E}_x[u(\mathcal{X}_{t \wedge \sigma_U}) | \F_s] 
& =  \mathbb{E}_x[ \mathbb{E}_{\mathcal{X}_{t \wedge \sigma_U}}[\mathbf{1}_A(\mathcal{X}_{\sigma_U})] | \F_s]
=  \mathbb{E}_x[ \mathbb{E}_{\mathcal{X}_{s \wedge \sigma_U}}[\mathbf{1}_A(\mathcal{X}_{\sigma_U})] ] \\
& =  \mathbb{E}_x[u(X_{s \wedge \sigma_U})], \qquad \mathbb{P}_x  \mbox{-.a.s.}.
\end{align*}
Hence $u(\mathcal{X}_{t \wedge \sigma_U})$, $t \geq 0$, is a $\mathbb{P}_x$-martingale.
The uniform integrability is immediate, because $u$ is bounded. We have proved that $u$ is harmonic.

By EHI, $u$ admits a modification $\widetilde u$ which is continuous on $B$. Then, for any $x \in B$,
\begin{align*}
P^B_t \widetilde u(x)
& = \int_B p(t,x,y) \widetilde u(y) d\mu
 = \int_B p(t,x,y) u(y) d\mu
 = \mathbb{E}_x [ u(\mathcal{X}_t) \mathbf{1}_{\{t < \sigma_U \}} ] \\
& = \mathbb{E}_x [ \mathbb{E}_{\mathcal{X}_t} [\mathbf{1}_A(\mathcal{X}_{\sigma_U})] \mathbf{1}_{\{t < \sigma_U \}} ].
\end{align*}
Hence, by the strong Markov property,
\begin{align*}
P^B_t \widetilde u(x)
& = \mathbb{E}_x [ \mathbb{E}_x [\mathbf{1}_A(\mathcal{X}_{\sigma_U} \circ \theta_t) | \F_t ] \mathbf{1}_{\{t < \sigma_U \}} ] 
= \mathbb{E}_x [\mathbf{1}_A(\mathcal{X}_{\sigma_U} \circ \theta_t) \mathbf{1}_{\{t < \sigma_U \}} ] \\
& = \mathbb{E}_x [\mathbf{1}_A(\mathcal{X}_{\sigma_U}) \mathbf{1}_{\{t < \sigma_U \}} ].
\end{align*}
Letting $t \downarrow 0$, and using the continuity of $H$, we obtain
\[ \widetilde u(x) = \lim_{t \downarrow 0}P^B_t \widetilde u(x) = \lim_{t \downarrow 0} \mathbb{E}_x [\mathbf{1}_A(\mathcal{X}_{\sigma_U}) \mathbf{1}_{\{t < \sigma_U \}} ] = u(x). \]
This proves that $u$ is continuous. 

The fact that for any $x \in X$, $\omega(x,\cdot,B)=0$ charges no set of zero capacity, follows from \cite[Theorem 4.3.3]{FOT94}.
\end{proof}  

\begin{proposition}[Mean value property]
If $u$ is harmonic and continuous on $V$, then 
\begin{align} \label{eq:mean value property}
 u(x)
 = \int_{\partial_X B} u(y) \, \omega(x,dy,B), \qquad \mbox{ for all } x \in B.
\end{align}
\end{proposition}

\begin{proof}
If $u$ is harmonic on $V$, then $u(x) = H_{U} u(x)$ for q.e.~$x \in B$ by \cite[Theorem 6.7.9]{CF12}. Hence the proposition follows from Lemma \ref{lem:harm meas cts}.
\end{proof} 


The following monotonicity property of the harmonic measure follows from the fact that $(\mathcal{X}_t)_{t \geq 0}$ has continuous paths. If $A \subset B$ are open subsets of $X$ and $D \subset X$ is open, then 
\begin{align} \label{eq:monotonicity of harm meas}
\omega(x, D \cap \partial_X B, D \cap B) \leq \omega(x, D \cap \partial_X A, D \cap A), \qquad \forall x \in D \cap A.
\end{align}

\subsection{Maximum principle} \label{ssec:max principle}
Let $\Omega \subset X$ be open. In this subsection, we do not assume inner uniformity.

\begin{proposition}[Maximum principle] \label{prop:max principle}
\begin{enumerate}
Suppose $(\e^D_{\Omega},\F^0(\Omega))$ is transient.
Let $B = B_{\Omega}(x,r)$ be an inner ball such that $B_{\Omega}(x,2r) \subsetneq \Omega$. 
\item
Let $f$ be a non-negative harmonic function on $B_{\Omega}(x,2r)$ with Dirichlet boundary condition along $\partial_{\widetilde\Omega} \Omega \cap B_{\widetilde\Omega}(x,2r)$. 
Then $f$ has a modification $\widetilde f$ which is continuous on $B_{\Omega}(x,3r/2)$ and satisfies
\[ \sup_{B} \widetilde f = \sup_{\partial_{\Omega}B} \widetilde f. \]
\item
Let $f$ be a non-negative harmonic function on $\Omega \setminus B_{\Omega}(x,r)$ with Dirichlet boundary condition along $\partial_{\widetilde\Omega} \Omega \setminus B_{\widetilde\Omega}(x,r)$. 
Then $f$ has a modification $\widetilde f$ which is continuous on $\Omega \setminus B_{\Omega}(x,3r/2)$ and satisfies
\[ \sup_{\Omega \setminus B_{\Omega}(x,2r)} \widetilde f = \sup_{\partial_{\Omega}B_{\Omega}(x,2r)} \widetilde f. \]
\end{enumerate}
\end{proposition}

\begin{proof} 
Since the proof of (ii) is very similar to that of (i), we only show (i). 
Let $f$ be as required in (i). Then there is a function $f^{\sharp} \in \F^0(\Omega)$ such that $f=f^{\sharp}$ a.e.~on $B_{\Omega}(x,3r/4)$. Replacing $f$ by $f^{\sharp}$ if necessary, it suffices to consider the case when $f \in \F^0(\Omega)$. Recall from Theorem \ref{thm:VD+PI+CSA} that EHI holds. By a standard argument, (e.g., \cite[Lemma 5.2]{GT12})
 the elliptic Harnack inequality implies the H\"older continuity of harmonic functions. Hence, $f$ admits a modification $\widetilde f$ which is continuous on $B_{\Omega}(x,3r/2)$, in particular on the part of the boundary $\partial_{\Omega} B$.  Let $(\mathcal{X}_t)$, $t>0$, be the diffusion process associated with $(\e^D_{\Omega},\F^0(\Omega))$ and let $\F^0(\Omega)_e$ be the extended Dirichlet space. 
 
Set $U = \Omega \setminus B$.   
It is immediate from its definition \eqref{eq:H_Uf} that $H_U \widetilde f$ satisfies the maximum principle. We show that $H_U \widetilde f$ is a quasi-continuous modification of $f$. Since $(\e^D_{\Omega},\F^0(\Omega))$ is transient, its extended Dirichlet space admits the orthogonal decomposition
\begin{align*}
& \F^0(\Omega)_e = \F^0(\Omega)_{e, \Omega \setminus U} \oplus \mathcal{H}_U, \\
& \F^0(\Omega)_{e, \Omega \setminus U} = \big\{ u \in \F^0(\Omega)_e: \widetilde u = 0 \mbox{ q.e. on } U \big\}. 
\end{align*}
Let $\mathcal{P}_{\mathcal{H}_U}:\F^0(\Omega)_{e} \to \mathcal{H}_U$ be the orthogonal projection.
By \cite[Theorem 4.3.2]{FOT94}, $H_U \widetilde f$ is a quasi-continuous modification of $\mathcal{P}_{\mathcal{H}_U}f$.
 
Since $f$, hence also $-f$, is harmonic on $U$, and by the $0$-order analogue of \cite[Lemma 2.2.6 (iii) $\Rightarrow$ (ii)]{FOT94}, we have
\[ \e(f,v) = 0, \quad \forall v \in \F^0(\Omega)_{e, \Omega \setminus U}. \]
That is, $f \in \mathcal{H}_U$. Hence, $f = \mathcal{P}_{\mathcal{H}_U}f$. We have proved that $f$ admits the quasi-continuous modification $H_U \widetilde f$ which satisfies the maximum principle.
\end{proof}

\subsection{Representation formula for harmonic functions}

An essential step in the proof of the boundary Harnack principle is the reduction to Green functions estimates, see Section \ref{ssec:reduction to GF}, through a representation formula for harmonic functions. On Euclidean space, a representation formula was used for this purpose by Aikawa (\cite{Aik01}, \cite[p.331]{ALM03}). In \cite{GyryaSC} it is claimed that the representation extends to (non-fractal) symmetric Dirichlet spaces. Note that classic potential theoretic references require the domain of integration to be compact (see, e.g.,~\cite[Lemma 6.2]{GH13}), which is not the case in \eqref{eq:representation formula} below. Thus, we include a full proof. 

\begin{proposition} \label{prop:representation formula}
Let $\Omega$ be a connected open subset of $X$ and suppose that $(\e^D_{\Omega},\F^0(\Omega))$ is transient. Let $r,\epsilon \in (0,\infty)$ and $\xi \in \widetilde \Omega \setminus \Omega$. Let $f \in \F^0_{\mbox{\em \tiny{loc}}}(\Omega,B_{\Omega}(\xi,(1+2\epsilon)r))$ and suppose that $f$ is non-negative harmonic on $B_{\Omega}(\xi,(1+\epsilon)r)$. Then $f$ admits a $\mu$-version $\widetilde f$ which is continuous on $B_{\Omega}(\xi,(1+\epsilon)r)$ and satisfies
\begin{align} \label{eq:representation formula}
\widetilde f(x) = \int_{\partial_{\Omega} B_{\Omega}(\xi,r)} G_{B_{\Omega}(\xi,(1+\epsilon)r)} (x,y) \nu_f(dy), \qquad \mbox{ for any } x \in B_{\Omega}(\xi,r),
\end{align}
for some Radon measure $\nu_f$ on $B_{\Omega}(\xi,(1+\epsilon)r)$ with $\nu_f \big( B_{\Omega}(\xi,(1+\epsilon)r) \setminus \partial_{\Omega} B_{\Omega}(\xi,r) \big)=0$.
\end{proposition}

\begin{proof}
Let $Y := B_{\Omega}(\xi,(1+\epsilon)r)$.
Since $f \in \F^0_{\mbox{\tiny{loc}}}(\Omega,B_{\Omega}(\xi,(1+2\epsilon)r))$, there exists $f^{\sharp} \in \F^0(\Omega)$ so that $f=f^{\sharp}$ $\mu$-a.e.~on $Y$. Since $f \geq 0$, we may assume that $f^{\sharp} \geq 0$ $\mu$-a.e.~on $\Omega$. 
By \cite[Lemma 6.1(a)]{GH13}, $P^{D,Y}_t f^{\sharp} \leq f^{\sharp}$ $\mu$-a.e. on $Y$ for all $t > 0$. Then \cite[Corollary 2.4]{Ou96} applied to the closed convex subset
$K := \{ u \in L^2(Y,\mu) | u \leq f^{\sharp} \mbox{ $\mu$-a.e.~on } Y \}$ of $L^2(Y,\mu)$ implies that 
\[ \forall u \in \F^0(Y), \quad u \wedge f^{\sharp} \in \F^0(Y) \quad  \mbox{ and } \quad \e(u \wedge f^{\sharp}, u \wedge f^{\sharp}) \leq \e(u,u). \]
This can easily be extended to
\begin{align} \label{eq:u wedge f^{sharp}}
 \forall u \in \F^0(Y)_e, \quad u \wedge f^{\sharp} \in \F^0(Y)_e \quad  \mbox{ and } \quad \e(u \wedge f^{\sharp}, u \wedge f^{\sharp}) \leq \e(u,u).
 \end{align}

Set $A = \{ x \in \Omega : d_{\Omega}(\xi,x) \leq r \}$ 
Let $g \in \F^0(Y)_e$ be such that 
\begin{align} \label{eq:g min}
 \e(g,g) = \inf \left\{ \e(u,u) \Big| u \in \F^0(Y)_e, \widetilde u \geq \widetilde{f^{\sharp}} \mbox{ q.e.~on } A \right\}.
\end{align}
Existence and uniqueness of $g$ can be shown in the same way as \cite[Proof of Lemma 2.1.1]{FOT94} by using the $0$-order version of \cite[Theorem 2.1.4(i)]{FOT94}.
Note that $g \geq f^{\sharp}$ on $A$.
Applying \eqref{eq:u wedge f^{sharp}} to $u=g$, we obtain that $g \wedge f^{\sharp}$ also obtains the minimum in \eqref{eq:g min}, hence $g = g \geq f^{\sharp}$. Therefore, $g = f^{\sharp}$ q.e.~on $A$.

Hence, for any $v \in \F^0(Y)_e$ with $\widetilde v \geq 0$ q.e.~on $A$, we have $\e(g+tv,g+tv) \geq \e(g,g)$ for all $t>0$. Thus, $\e(g,v) \geq 0$.

Set $F = \partial_{\Omega}B_{\Omega}(\xi,r)$. For every $u \in \F \cap C_0(X)$ with support in $Y \setminus F$, it follows from a cut-off function argument that $u \mathbf{1}_{B_{\Omega}(\xi,r)}$ and $u \mathbf{1}_{Y \setminus A}$ are in $\F \cap C_0(X)$, because $\inf \{ d(x,y) | x \in \mbox{supp}(u \mathbf{1}_{B_{\Omega}(\xi,r)}), y \in \mbox{supp}(u \mathbf{1}_{Y \setminus A}) \} > 0$.
Since $g = f^{\sharp}$ is harmonic on $Y$, we have $\e(g,u \mathbf{1}_{B_{\Omega}(\xi,r)}) = 0$. Applying the result of the previous paragraph to $v=u \mathbf{1}_{Y \setminus A}$, we find that $\e(g,u \mathbf{1}_{Y \setminus A}) = 0$. Now for $v \in \F^0(Y)_e$ with $\widetilde v \geq 0$ q.e.~on $F$, we have $\widetilde v - \widetilde{v^+} = 0$ q.e.~on $F$ and hence $\e(g,v-v^+)=0$ by \cite[Theorem 2.3.3(ii)]{FOT94}. By the result of the previous paragraph, $\e(g,v) = \e(g,v^+) \geq 0$. 

Now it follows from the $0$-order version of \cite[Lemma 2.2.6]{FOT94} 
that $g$ is a ($0$ order) potential for some positive Radon measure $\nu$ supported on $F$. By \cite[Theorem 2.2.5]{FOT94},
\begin{align} \label{eq:g is potential} \e^D_Y(g,v) = \int_{\partial_{\Omega}B(\xi,r)} \widetilde{v}(x) \nu(dx), \quad \forall v \in \F^0(Y)_e.
\end{align}
Let $G_Y$ be the Green operator defined by \cite[(1.5.3)]{FOT94} associated with $(\e^D_Y,\F^0(Y))$.
We will apply \eqref{eq:g is potential} to $v = G_{Y}\phi$ for any test function $\phi:=u h$ with $u$ being non-negative bounded and $h$ the reference function (see \cite[p.35]{FOT94}) of the transient semigroup associated with $(\e^D_Y,\F^0(Y)_e)$. Then $\int_Y \phi G_Y \phi d\mu < \infty$. By \cite[Theorem 1.5.4]{FOT94} we obtain that $v \in \F^0(Y)$, and 
\begin{align*}
\int_Y  g(x) \phi(x) \mu(dx)  
  = \e^D_Y(g,v) 
 & = \int_F \int_Y G_{Y}(y,x) \phi(x) \mu(dx) \nu(dy) \\
 & = \int_Y \left( \int_F G_{Y}(y,x) \nu(dy) \right) \phi(x) \mu(dx).
\end{align*}
Here we used that $\int_Y G_Y(\cdot,y) \phi(y) \mu(dy)$ is a quasi-continuous $\mu$-version of $G_Y\phi$. By definition of the Green operator and the fact that the semigroup admits the heat kernel, it is clear that $G_Y$ admits as kernel the Green function $G_Y(\cdot,\cdot)$. The quasi-continuity follows from Lemma \ref{lem:4.7} and the dominated convergence theorem.

Hence, 
\[  g(x) = \int_F G_Y(y,x) \nu(dy) \quad \mbox{ for a.e. } x \in Y. \]
Since $f= g$ $\mu$-a.e.~on $B_{\Omega}(\xi,r)$, the assertion follows for almost every $x \in B_{\Omega}(\xi,r)$. Since $f$ is harmonic, it satisfies EHI, hence admits a continuous modification $\widetilde f$ (see, e.g., \cite[Lemma 5.2]{GT12}). Also, for any $y \in \F$, the Green function $G(\cdot,y)$ is continuous on $B_{\Omega}(\xi,r)$ by Lemma \ref{lem:4.7}, hence $g$ is continuous by the dominated convergence theorem. Thus, the assertion follows.
\end{proof}

\section{Boundary Harnack Principle}
\label{sec:BHP}

\subsection{Reduction to Green functions estimates} \label{ssec:reduction to GF}
Let $(X,d,\mu,\e,\F)$ be a MMD space that satisfies VD and w-HKE($\Psi$)  and $d$ is geodesic.

Let $\Omega$ be an open connected subset of $X$.
Fix $c_u \in (0,1)$ and $C_u \in (1,\infty)$. Let $A_3 = 2(12 + 12C_u)$, $A_0 = A_3 + 7$. 
We do not require the inner uniformity of the whole domain $\Omega$, but assume that $\Omega$ is locally $(c_u,C_u)$-inner uniform near a fixed boundary point $\xi$ in the sense of Definition \ref{def:R_xi}. That is, we will assume that $R_{\xi}>0$.  We use the notation of Section \ref{ssec:uniform domains}.

Recall that for an open set $Y \subset \Omega$, $ G_{Y}$ is the Green function corresponding to the transient Dirichlet form $( \e^D_{Y},\F^0(Y))$.

\begin{theorem} \label{thm:bHP with GF's}
There exists a constant $A_1' \in (1,\infty)$ such that for any $\xi \in \partial_{\widetilde{\Omega}} \Omega$ with $R_{\xi}>0$ and any
 \[ 0 < r < \left(1-\frac{7}{A_0 \vee \frac{12}{c_u}} \right) R_{\xi}. \]
we have
 \[ \frac{  G_{Y} (x,y) }{   G_{Y} (x',y) }
     \leq A_1' \frac{  G_{Y} (x,y') }{  G_{Y} (x',y') }, \]
for all $x,x' \in B_{\Omega}(\xi,r)$ and $y,y' \in \partial_{\Omega} B_{\Omega}(\xi,6r)$. 
Here, $Y = B_{\Omega}(\xi,A_0r)$, and .
The constant $A'_1$ depends only on $c_u$, $C_u$, $\beta_1$, $\beta_2$, $C_{\Psi}$ and the constants in \emph{VD} and \emph{w-HKE($\Psi$)}.
\end{theorem}
The proof of this theorem is the content of Section \ref{sec: BHP for L} below. It is based on the estimates for the Green's functions in Section \ref{ssec:GF's estimates} and the maximum principles of Proposition \ref{prop:max principle}.

\begin{remark}
$r < \left(1-\frac{7}{\left(A_0 + \frac{8}{c_u} \right)} \right) R_{\xi}$ implies that $r < R_{\xi'}$ for all $\xi' \in \partial_{\widetilde{\Omega}} \Omega \cap B_{\widetilde \Omega}(\xi,7R_{\xi})$.
\end{remark}

\begin{theorem} \label{thm:bHP for u}
There exists a constant $A_1 \in (1,\infty)$ such that for any $\xi \in \partial_{\widetilde{\Omega}} \Omega$ with $R_{\xi}>0$, any
 \[ 0 < r < \left( 1-\frac{7}{\left(A_0 + \frac{8}{c_u}\right)} \right) R_{\xi}, \]
and any two functions $u,v$ that are non-negative harmonic on $B_{\Omega}(\xi,A_0 r)$ with Dirichlet boundary condition along $(\partial_{\widetilde{\Omega}} \Omega) \cap B_{\widetilde \Omega}(\xi,2A_0r)$, we have
 \[  \frac{u(x)}{u(x')} \leq A_1 \frac{v(x)}{v(x')}, \]
for all $x, x' \in B_{\Omega}(\xi,r)$. The constant $A_1$ depends only on $c_u$, $C_u$, $\beta_1$, $\beta_2$, $C_{\Psi}$ and the constants in \emph{VD} and \emph{w-HKE($\Psi$)}.
\end{theorem}

\begin{proof}
Fix $\xi \in \partial_{\widetilde{\Omega}} \Omega$ and $r > 0$ as in the theorem. Let $Y := B_{\Omega}(\xi,A_0r)$. By the representation formula of Proposition \ref{prop:representation formula}, there exists a Borel measure $\nu_u$ such that
\begin{align} \label{eq:u is a potential}
 u(z) = \int_{\partial_{\Omega} B_{\Omega}(\xi,6r)}  G_{Y}(z,y) \nu_u(dy)
\end{align}
for all $z \in B_{\Omega}(\xi,6r)$.

By Theorem \ref{thm:bHP with GF's}, there exists a constant $A_1' \in (1,\infty)$ such that for all $x,x' \in B_{\Omega}(\xi,r)$ and all $y,y' \in \partial_{\Omega} B_{\Omega}(\xi,6r)$, we have
 \[ \frac{  G_{Y}(x,y) }{  G_{Y}(x',y) } \leq A'_1 \frac{  G_{Y}(x,y') }{  G_{Y}(x',y') }. \]
For any (fixed) $y' \in \partial_{\Omega} B_{\Omega}(\xi,6r)$, we find that
\begin{align*}
\frac{1}{A'_1} u(x)  
& \leq  \frac{  G_{Y}(x,y') }{  G_{Y}(x',y') }   \int_{\partial_{\Omega} B_{\Omega}(\xi,6r)}   G_{Y}(x',y) \nu_u(dy) \\
& =   \frac{  G_{Y}(x,y') }{  G_{Y}(x',y') }  u(x')
   \leq  A'_1 u(x).
\end{align*}
We get a similar inequality for $v$. Thus, for all $x, x' \in B_{\Omega}(\xi,r)$,
\begin{equation} \label{eq:bHP with u and G}
\frac{1}{A'_1} \frac{ u(x) }{ u(x') } 
    \leq \frac{  G_{Y}(x,y') }{ G_{Y}(x',y') }   \leq   A'_1 \frac{ v(x) }{ v(x') }.
\end{equation}
\end{proof}

\begin{corollary}[Carleson estimate]
Let $\xi \in \partial_{\widetilde{\Omega}} \Omega$ with $R_{\xi}>0$ and $r \in (0,R_{\xi}/3)$. Let $u$ be a non-negative harmonic function on $B_{\Omega}(\xi,A_0r)$ with Dirichlet boundary condition along $(\partial_{\widetilde{\Omega}} \Omega) \cap B_{\widetilde \Omega}(\xi,6r)$. Then, for any $x \in B_{\Omega}(\xi,r/2)$,
 \[  u(y) \leq C u(x_r) \quad \forall y \in B_{\Omega}(x,r/2), \]
 where $x_r$ is as in Lemma \ref{lem:x_r}.
The constant $C$ depends only on $c_u$, $C_u$, $\beta_1$, $\beta_2$, $C_{\Psi}$ and the constants in \emph{VD} and \emph{w-HKE($\Psi$)}.
\end{corollary}

\begin{proof}
This can be proved by applying the boundary Harnack principle of Theorem \ref{thm:bHP for u} to $u$ and a Green function on a suitably chosen ball, and then using the Green function estimates of Lemma \ref{lem:4.9}. The proof is omitted since it is similar to the proof of \cite[Theorem 4.17]{GyryaSC}.
\end{proof}

\subsection{Proof of Theorem \ref{thm:bHP with GF's}} \label{sec: BHP for L}

We follow closely \cite{Aik01}, \cite{ALM03}, \cite{GyryaSC} and \cite{LierlSC1}.
Let $\Omega$ be as above and fix $\xi \in \partial_{\widetilde{\Omega}} \Omega$ with $R_{\xi} > 0$.

\begin{definition}
For $\eta \in (0,1)$ and any open $U \subset X$, define the \emph{capacitary width} $w_{\eta}(U)$ by
 \[ w_{\eta}(U) = \inf \left\{ r>0 : \forall x \in U, \frac{ {\mbox{Cap}}_{B(x,2r)} \big( \overline{B(x,r)} \setminus U \big) }{ {\mbox{Cap}}_{B(x,2r)} \big( \overline{B(x,r)} \big) } \geq \eta \right\}, \]
with the convention that $w_{\eta}(U) = +\infty$ when the infimum is taken over the empty set (e.g.~when $\mbox{Cap}_{B(x,2r)}$ is not well-defined).
\end{definition}
Note that $w_{\eta}(U)$ is an increasing function of $\eta \in (0,1)$ and an increasing function of the set $U$.

\begin{lemma} \label{lem:4.12} 
There are constants $A_7,\eta \in (0,\infty)$ depending only on $c_u$, $C_u$, $\beta_1$, $\beta_2$, $C_{\Psi}$ and the constants in \emph{VD} and \emph{w-HKE($\Psi$)}, such that for all $0 < r < 2 R_{\xi}$, 
 \[ w_{\eta}(\{ y \in B_{\Omega}(\xi,A_0 r): d_{\Omega}(y, \partial_{\widetilde{\Omega}} \Omega) < r \}) \leq A_7r. \]
\end{lemma}

\begin{proof} We follow \cite[Lemma 4.12]{GyryaSC}. 
Let $Y_r = \{ y \in B_{\Omega}(\xi,A_0 r): d_{\Omega}(y, \partial_{\widetilde{\Omega}} \Omega) < r \}$ and $y \in Y_r$. Since $\frac{4}{c_u} r < \frac{1}{2} \mbox{diam}_{\Omega}(\Omega)$, there exists a point $x \in \Omega$ such that
$d_{\Omega}(x,y) = 4r / c_u$. There exists an inner uniform curve connecting $y$ to $x$ in $\Omega$. Let $z \in \partial_{\Omega} B_{\Omega}(y,2r/c_u)$ be a point 
on this curve and note that $d_{\Omega}(y,z) = 2r/c_u \leq d_{\Omega}(x,y) - d_{\Omega}(y,z) \leq d_{\Omega}(x,z)$. Hence,
 \[ d_{\Omega}(z, \partial_{\widetilde{\Omega}} \Omega)  \geq  c_u \min\{d_{\Omega}(y,z),d_{\Omega}(z,x)\} = 2r. \] 
So for any $y \in Y_r$ there exists a point $z \in \partial_{\Omega} B_{\Omega}(y,2r/c_u)$ with $d_{\Omega}(z,\partial_{\widetilde{\Omega}} \Omega) \geq 2r$. Thus, 
$B(z,r) \subset B(y,A_7 r) \setminus Y_r$ if $A_7 = 2/c_u + 1$. The capacity of $B(y,A_7 r) \setminus Y_r$ in $B(y,2A_7 r)$ is larger than the capacity of $B(z,r)$ in 
$B(y,2A_7 r)$, which is larger than the capacity of $B(z,r)$ in $B(z,3A_7 r)$. The latter capacity is comparable to $V(z,r)/\Psi(r)$ by RES($\Psi$), which is satisfied by Theorem \ref{thm:VD+PI+CSA}. On the other hand, RES($\Psi$) implies that $\mbox{Cap}_{B(y,2A_7 r)}(B(y,A_7 r))$ is also comparable to $V(z,r)/\Psi(r)$. Hence, there exists $\eta>0$ so that $w_{\eta}(Y_r) \leq A_7 r$.
\end{proof}

Fix $\eta \in (0,1)$ small enough so that the conclusion of Lemma \ref{lem:4.12} applies and write $w(U) := w_{\eta}(U)$ for the capacitary width of an open set 
$U \subset \Omega$. 

The following lemma relates the capacitary width to the harmonic measure $\omega$. We write $f \asymp g$ to indicate that $C_1 g \leq f \leq C_2 g$, for some 
constants $C_1, C_2$ that only depend on $c_u$, $C_u$, $\beta_1$, $\beta_2$, $C_{\Psi}$ and the constants in VD and w-HKE($\Psi$).

For any open set $B \subset X$ let $\partial_X B := \overline{B} \setminus B$ denote the boundary of $B$ with respect to the metric $d$ on $X$. 

\begin{lemma} \label{lem:4.13}
There is a constant $a_1 \in (0,1]$ depending only on $\beta_1$, $\beta_2$, $C_{\Psi}$ and the constants in \emph{VD} and \emph{w-HKE($\Psi$)}, such that for any non-empty open set $U \subset X$ and any $x \in U$, $0 < r < \frac{1}{4} \mbox{\em diam}_d(X)$, we have
 \[ \omega(x,U \cap \partial_X B(x,r),U \cap B(x,r)) \leq \exp(2 - a_1 r / w(U)). \]
\end{lemma}

\begin{proof} The proof is nearly the same as in \cite[Lemma 1]{Aik01}, \cite[Lemma 4.13]{GyryaSC} and \cite[Lemma 4.8]{LierlSC1}. The comparison of capacities used in the proof follows easily from RES($\Psi$), which holds by Theorem \ref{thm:VD+PI+CSA}.
\end{proof}

By Lemma \ref{lem:x_r}, there exists, for any $\xi \in \partial_{\Omega} \Omega$, $r>0$, a point $\xi_{r}$ in $\Omega$ with $d_{\Omega}(\xi,\xi_{r}) = 4r$ and 
\[ d(\xi_{r}, X \setminus \Omega) \geq 2 c_u r. \]

\begin{lemma} \label{lem:4.14}
There exist constants $A_2, A_3 \in (0,\infty)$ depending only on $c_u$, $C_u$, $\beta_1$, $\beta_2$, $C_{\Psi}$ and the constants in \emph{VD} and \emph{w-HKE($\Psi$)}, such that for any $0 < r < R_{\xi}$ and any $x \in B_{\Omega}(\xi,r)$, we have
 \[  \omega \big(x, \partial_{\Omega} B_{\Omega}(\xi,2r),B_{\Omega}(\xi,2r) \big) 
     \leq  A_2 \frac{ V(\xi,r) }{ \Psi(r) }  G_{B_{\Omega}(\xi,A_3 r)} (x,\xi_{r}). \]
\end{lemma}

\begin{proof}
We follow \cite[Lemma 4.7]{LierlSC1}.
Let $A_3 = 2(12 + 12 C_u)$ so that all $(c_u,C_u)$-inner uniform paths that connect two points in $B_{\Omega}(\xi,12r)$ stay in $B_{\Omega}(\xi,A_3 r / 2)$.
Again we set  $A_0 = A_3+7$ and $Y := B_{\Omega}(\xi, A_0 r)$. 
For $z \in B_{\Omega}(\xi,A_3 r)$, set 
 \[  G'(z) :=  G_{B_{\Omega}(\xi,A_3 r)} (z,\xi_{r}). \]
Let $s =  \min\{ c_u r, 5r/C_u \}$. 
As $B_{\Omega}(\xi_{r},s)  \subset  B_{\Omega}(\xi,A_3r) \setminus B_{\Omega}(\xi,2r)$,
the maximum principle of Proposition \ref{prop:max principle} yields
 \[  \forall y \in B_{\Omega}(\xi,2r), \quad 
     G'(y)  \leq  \sup_{z \in \partial_{\Omega} B_{\Omega}(\xi_{r},s) }  G'(z). \]
Lemma \ref{lem:4.9} and the volume doubling condition yield
 \[  \sup_{z \in \partial_{\Omega} B_{\Omega}(\xi_{r},s) }  G'(z)   \leq  C \frac{\Psi(r)}{V(\xi,r)}, \]
for some constant $C>0$.
Hence, there exists $\epsilon_1 > 0$ such that
 \[ \forall y \in B_{\Omega}(\xi,2r), \quad 
    \epsilon_1  \frac{V(\xi,r)}{\Psi(r)} G'(y)  \leq e^{-1}. \] 
Write $ B_{\Omega}(\xi,2r) = \bigcup_{j \geq 0} U_j \cap B_{\Omega}(\xi,2r)$,
where
\[ U_j = \left\{ x \in Y : \exp (-2^{j+1}) \leq \epsilon_1  \frac{V(\xi,r)}{\Psi(r)} G'(x) < \exp(-2^j) \right\}. \]
Let $ V_j := \bigcup_{k \geq j} U_k$.
We claim that
\begin{equation} \label{eq:4.14} 
 w_{\eta}\big(V_j \cap B_{\Omega}(\xi, 2r) \big) \leq A_4 r \exp \left( - 2^j / \sigma \right)
\end{equation}
for some constants $A_4, \sigma \in (0,\infty)$.

Suppose $x \in V_j \cap B_{\Omega}(\xi,2r)$. Note that then $ d_{\Omega}(x, \partial_{\widetilde\Omega}\Omega) = d_{\Omega}(x, \widetilde{\Omega} \setminus Y)$. Observe that for $z \in \partial_{\Omega} B_{\Omega}(\xi_{r},s)$, by the inner uniformity of the domain, the length of the Harnack chain of balls in $B_{\Omega}(\xi, A_3 r) \setminus \{ \xi_{r} \}$ connecting $x$ to $z$ is at most $A_5 \log (1 + A_6 r / d_{\Omega}(x,\partial_{\widetilde\Omega}\Omega)$ for some constants $A_5, A_6 \in (0,\infty)$. Therefore there are constants $\epsilon_2, \epsilon_3, \sigma > 0$ such that
\begin{align*}
 \exp (-2^j) 
& > \epsilon_1 \frac{V(\xi,r)}{\Psi(r)} G'(x)
\geq \epsilon_2 \frac{V(\xi,r)}{\Psi(r)} G'(z) \left( \frac{ d_{\Omega}(x, \partial_{\widetilde\Omega}\Omega) }{ r } \right)^{\sigma} \\
& \geq \epsilon_3 \left( \frac{ d_{\Omega}(x, \partial_{\widetilde\Omega}\Omega) }{ r } \right)^{\sigma}.
\end{align*}
The last inequality is obtained by applying Lemma \ref{lem:4.9} with $R = A_3r$ and $\delta = 5/A_3$. Now we have that for any $x \in V_j \cap B_{\Omega}(\xi, 2r)$,
 \[ d_{\Omega}(x, \partial_{\widetilde\Omega}\Omega)
  \leq \big( \epsilon_3^{-1/\sigma} \exp(-2^j / \sigma) r \big) \wedge 2r. \]
This together with Lemma \ref{lem:4.12} yields \eqref{eq:4.14}.

Let $R_0 = 2r$ and for $j \geq 1$,
 \[ R_j = \left( 2 - \frac{6}{\pi^2} \sum_{k=1}^j \frac{1}{k^2} \right) r. \]
Then $R_j \downarrow r$ and
\begin{align} \label{eq:4.15}
\sum_{j=1}^{\infty}  \exp \left( 2^{j+1} - \frac{ a_1 (R_{j-1} - R_j) }{ A_4 r \exp(-2^j / \sigma) } \right)
& = \sum_{j=1}^{\infty}  \exp \left( 2^{j+1} - \frac{ 6 a_1 }{ A_4 \pi^2 } j^{-2} \exp(2^j / \sigma) \right) \nonumber \\
& \leq \sum_{j=1}^{\infty}  \exp \left( 2^{j+1} - \frac{ 3 a_1 }{ C_{\Omega} A_4 \pi^2 } j^{-2} \exp(2^j / \sigma) \right) \nonumber \\
& < C < \infty.
\end{align}
Let $\omega_0 = \omega( \cdot, \partial_{\Omega} B_{\Omega}(\xi, 2r), B_{\Omega}(\xi, 2r))$ and
 \[ d_j = \begin{cases} 
\sup \left\{ \frac{ \Psi(r) \omega_0(x) }{ V(\xi,r) G'(x) }  : x \in U_j \cap B_{\Omega}(\xi,R_j) \right\},  \quad 
    & \mbox{if } U_j \cap B_{\Omega}(\xi, R_j) \neq \emptyset, \\
0,  & \mbox{if } U_j \cap B_{\Omega}(\xi, R_j) = \emptyset.
\end{cases}
 \]
Since the sets $U_j \cap B_{\Omega}(\xi,2r)$ cover $B_{\Omega}(\xi,2r)$ and $B_{\Omega}(\xi,r) \subset B_{\Omega}(\xi,R_k)$ for each $k$, to prove Lemma \ref{lem:4.14}, it suffices to show that
 \[  \sup_{j \geq 0} d_j \leq A_2 < \infty \]
where the constant $A_2$ is as in the statement of Lemma \ref{lem:4.14}.

We proceed by iteration. Since $\omega_0 \leq 1$, we have by definition of $U_0$,
 \[  d_0 = \sup_{ U_0 \cap B_{\Omega}(\xi,2r) }  \frac{ \Psi(r) \omega_0(x) }{ V(\xi,r) G'(x) }  \leq  \epsilon_1 e^2. \]
Let $j \geq 1$. For $y \in U_{j-1} \cap B_{\Omega}(\xi, R_{j-1})$, we have by definition of $d_{j-1}$ that
\begin{align} \label{eq:U_{j-1}}
 \omega_0(y) \leq d_{j-1} \frac{  V(\xi,r) }{ \Psi(r) } G'(y).
\end{align}
On the other hand, we have $\omega_0(y) - d_{j-1} \frac{  V(\xi,r) }{ \Psi(r) } G'(y) \leq 1$ for $y \in V_j \cap \partial_X B(\xi,R_{j-1})$. 
Now let $x \in U_j \cap B_{\Omega}(\xi, R_j)$. Lemma \ref{lem:harm meas cts} and the mean value property \eqref{eq:mean value property} yield  
\begin{align} \label{eq:4.16}
 \omega_0(x)
& = d_{j-1} \frac{  V(\xi,r) }{ \Psi(r) } G'(x) \nonumber \\
& + 
\int_{\partial_X \big(V_j \cap B_{\Omega}(\xi,R_{j-1})\big)}
\omega_0(y) - d_{j-1} \frac{  V(\xi,r) }{ \Psi(r) } G'(y) \, \omega \left( x, dy , V_j \cap B_{\Omega}(\xi,R_{j-1}) \right) \nonumber \\
& \leq   d_{j-1} \frac{ V(\xi,r) }{ \Psi(r) } G'(x) 
\ + \ \omega \left( x,  V_j  \cap \partial_X B_{\Omega}(\xi,R_{j-1}), V_j \cap B_{\Omega}(\xi,R_{j-1}) \right).
\end{align}
Here we used \eqref{eq:U_{j-1}} and the fact that $(\partial_X V_j) \cap \Omega$ is a subset of $U_{j-1}$.

Let $D := B(x,C_{\Omega}^{-1}(R_{j-1}-R_j))$ and 
let $D'$ be the connected component of $p^{-1}(D \cap \overline{\Omega})$ that contains $x=p(x)$. By Lemma \ref{lem:metrics are comparable}, we have
 \[ D' \cap \Omega \subset B_{\Omega}(x,R_{j-1} - R_j) \subset B_{\Omega}(\xi,R_{j-1}). \]
Thus, by the monotonicity property \eqref{eq:monotonicity of harm meas} the second term on the right hand side of \eqref{eq:4.16} is not greater than
$ \omega \big( x,  V_j  \cap \partial_X (D' \cap \Omega), V_j \cap D' \cap \Omega \big)$.
Since $V_j \cap D' \cap \Omega = V_j \cap D' \cap D$ and
$V_j  \cap \partial_X (D' \cap \Omega) = V_j \cap D' \cap \partial_X D$,
we obtain that
 \begin{align*}
& \omega \big( x,  V_j  \cap \partial_X B_{\Omega}(\xi,R_{j-1}), V_j \cap B_{\Omega}(\xi,R_{j-1}) \big) \\
& \leq
 \omega \bigg(x, V_j \cap D' \cap \partial_{X} B\left(x,C_{\Omega}^{-1} (R_{j-1}-R_j) \right), V_j \cap D' \cap B\left(x,C_{\Omega}^{-1}(R_{j-1}-R_j)\right) \bigg) \\
& \leq
 \exp \left( 2 - a_1 \frac{ C_{\Omega}^{-1} (R_{j-1} - R_j )}{ w_{\eta}(V_j \cap D') } \right) 
 \leq \exp \left( 2 - \frac{a_1}{C_{\Omega}} \frac{ R_{j-1} - R_j }{ w_{\eta}(V_j \cap B_{\Omega}(\xi, 2r)) } \right) \\
& \leq 
\exp \left( 2 - \frac{a_1}{C_{\Omega} A_4} \exp( 2^j / \sigma) \frac{ R_{j-1} - R_j }{ r } \right)  
 \leq \exp \left( 2 - \epsilon_6 j^{-2} \exp( 2^j / \sigma) \right),
\end{align*}
by Lemma \ref{lem:4.13}, the monotonicity of $U \mapsto w_{\eta}(U)$ and \eqref{eq:4.14}. Here $\epsilon_6 = \frac{6 a_1}{\pi^2 A_4 C_{\Omega}}$. 
Moreover, since $x \in U_j \cap B_{\Omega}(\xi,R_j)$, we have by definition of $U_j$ that
 \[ \epsilon_1 \frac{V(\xi,r)}{\Psi(r)} G'(x) \geq \exp(-2^{j+1}). \]
Hence, inequality \eqref{eq:4.16} becomes
\begin{align*}
\omega_0(x)  & \leq  \exp \left( 2 - \epsilon_6 j^{-2} \exp( 2^j / \sigma) \right) 
                   +  d_{j-1} \frac{ V(\xi,r) }{ \Psi(r) } G'(x)  \\
             & \leq  \left( \epsilon_1 \exp \left( 2 + 2^{j+1} - \epsilon_6 j^{-2} \exp( 2^j / \sigma) \right) 
                   +  d_{j-1} \right) \frac{ V(\xi,r) }{ \Psi(r) } G'(x).
\end{align*}
Dividing both sides by $\frac{ V(\xi,r) }{ \Psi(r) } G'(x)$ and taking the supremum over all
$x \in U_j \cap B_{\Omega}(\xi,R_j)$,
 \[ d_j  \leq   \epsilon_1 \exp \left( 2 + 2^{j+1} - \epsilon_6 j^{-2} \exp( 2^j / \sigma) \right) 
                   +  d_{j-1}, \]
and hence for every integer $i > 0$,
\begin{align*}
 d_i  
 & \leq  \epsilon_1 e^2 \bigg( 1 + \sum_{j=1}^{i} \exp  \left(2^{j+1} - \epsilon_6 j^{-2} \exp( 2^j / \sigma) \right) \bigg) 
 = \epsilon_1 e^2 (1 + C) < \infty
\end{align*}
by \eqref{eq:4.15}.
\end{proof}

\begin{proof}[Proof of Theorem \ref{thm:bHP with GF's}]
In view of the results of this section, the proof of \cite[Theorem 4.1]{LierlSC1} carries over line by line. The only difference is that, as in the Green function estimates of Lemma \ref{lem:4.8} and Lemma \ref{lem:4.9}, the space-time scaling is $\Psi(r)$ instead of $r^2$.
\end{proof}

\section{Applications} \label{sec:applications}

\subsection{Martin boundary}
\begin{theorem}
Let $(X,d,\mu,\e,\F)$ be a \emph{MMD} space that satisfies \emph{VD} and \emph{w-HKE($\Psi$)}  and $d$ is geodesic.
Let $\Omega \subset X$ be a bounded inner uniform domain in $X$. Then the Martin compactification relative to $(\e,\F)$ of $\Omega$ is homeomorphic to $\widetilde{\Omega}$ and each boundary point $\xi \in \widetilde{\Omega} \setminus \Omega$ is minimal.
\end{theorem}

\begin{proof}
The assertion can be proved along the line of \cite[Theorem 1.1]{ALM03} using the boundary Harnack principle of Theorem \ref{thm:bHP for u}.
\end{proof}

\subsection{Existence of a harmonic profile}

On inner uniform domains, the geometric boundary Harnack principle is crucial to obtain precise estimates for the Dirichlet heat kernel. See \cite{GyryaSC, LierlSC3}, which cover the case of regular Dirichlet spaces that induce a non-degenerate metric and satisfy a parabolic Harnack inequality.

In the case of unbounded domains, an important technique to obtain Dirichlet heat kernel estimates is the $h$-transform of the Dirichlet form by a \emph{harmonic profile} $h$ for $(\e,\F)$ on the domain $U$. 
In this section, we prove the existence of a harmonic profile on inner uniform domains in a MMD space. 

In forthcoming papers \cite{KL1, KL2}, the harmonic profile, as well as the geometric boundary Harnack principle of Theorem \ref{thm:bHP for u}, will be used to prove two-sided bounds for the Dirichlet heat kernel in inner uniform domains on fractal spaces.

\begin{definition}
A function $h$ on an unbounded domain $U$ is called a \emph{harmonic profile} if
\begin{enumerate}
\item
$h$ is harmonic on $U$,
\item
$h$ satisfies Dirichlet boundary condition along $\partial_{\widetilde{U}} U$, that is, $h \in \F_{\mbox{\tiny{loc}}}^0(U)$,
\item
$h>0$ in $U$.
\end{enumerate}
\end{definition}

\begin{remark}
Due to the maximum principle, a harmonic profile can exist only if the domain is unbounded. On bounded domains, Dirichlet heat kernel estimates can be obtained by using a ground state transform instead of an $h$-transform.
\end{remark}

\begin{proposition}
Let $(X,d,\mu,\e,\F)$ be a \emph{MMD} space that satisfies \emph{VD} and \emph{w-HKE($\Psi$)}  and $d$ is geodesic. Let $U$ be an unbounded inner uniform domain in $X$. Then there exists a harmonic profile for $(\e,\F)$ on $U$. It is unique up to multiplication by a scalar.
\end{proposition}

\begin{proof}
Uniqueness can be deduced from the boundary Harnack principle and a classical argument, see, e.g., \cite[Section 6]{Anc78}.
Existence can be proved as in the non-fractal case, see\cite[Section 4.3]{GyryaSC}, except that CSA($\Psi$) must be used instead of cutoff functions constructed from the metric induced by $(\e,\F)$.
\end{proof}

\section*{Acknowledgement}
I thank Naotaka Kajino for many helpful discussions and for his comments on an earlier version of this paper. I  thank Laurent Saloff-Coste for answering questions. I thank the referees for numerous useful comments.

\bibliographystyle{alpha}
\bibliography{bibliography}
\end{document}